\documentclass[12pt,a4paper]{article}

\usepackage[reqno]{amsmath}
\usepackage{amssymb}
\usepackage{amsthm}
\usepackage{upref}
\usepackage{amscd}
\usepackage[T1]{fontenc}
\usepackage{times}
\usepackage{xcolor}
\usepackage{array}
\usepackage{multirow}
\usepackage{makecell}
\usepackage{mathrsfs}
\usepackage{graphicx}

\theoremstyle{plain}
\newtheorem{theorem}{Theorem}
\newtheorem{corollary}[theorem]{Corollary}
\newtheorem{lemma}[theorem]{Lemma}
\newtheorem{proposition}[theorem]{Proposition}
\newtheorem{definition}[theorem]{Definition}

\newtheorem{remark}[theorem]{Remark}

\numberwithin{equation}{section}
\numberwithin{theorem}{section}

\begin{document}

\title{Lower and Upper Bounds for Sums of Eigenvalues of the
	Fractional-Logarithmic Laplacian}

\author{
	Maha Daoud$^{\mathrm{a},*}$ \qquad
	Hichem Hajaiej$^{\mathrm{b}}$
	\\[0.7em]
	{\small $^{\mathrm{a}}$Département Génie Mathématique et Modélisation, INSA Toulouse,}\\
	{\small 31400 Toulouse, France}\\
	{\small Email: \texttt{mahaadaoud@gmail.com}}\\[0.5em]
	{\small $^{\mathrm{b}}$Department of Mathematics, California State University at Los Angeles,}\\
	{\small Los Angeles, CA 90032, USA}\\
	{\small Email: \texttt{hhajaie@calstatela.edu}}\\[0.5em]
	{\small $^{*}$Corresponding author}
}

\date{}
\maketitle

\setcounter{secnumdepth}{2}
\setcounter{tocdepth}{1}

\tableofcontents

\begin{abstract}
In this work, we establish lower and upper bounds for sums of Dirichlet eigenvalues
	of the fractional-logarithmic Laplacian. A main challenge in such a study comes from the fact that this operator has a Fourier symbol that is not globally monotone in its radial variable due to its low-frequency behavior. The bounds have the correct leading-order asymptotics.  To complement the theoretical analysis, we also present a
	one-dimensional finite element approximation of the
	fractional-logarithmic eigenvalue problem. We compare the computed eigenvalue
	sums  with the explicit lower bound and the principal
	asymptotic term.
\end{abstract}

{\bf Keywords:} Fractional-Logarithmic Laplacian, Dirichlet eigenvalues, lower bound, upper bound, finite element approximation,
	numerical eigenvalue sums.

\section{ Introduction and Main Results}

Let $\Omega\subset\mathbb R^n$ be a bounded domain, and let
$$
\lambda_1(\Omega)\leq\lambda_2(\Omega)\leq\cdots\uparrow+\infty
$$
denote the Dirichlet eigenvalues of the operator under consideration. A central topic in spectral theory is to understand the asymptotic growth and bounds for the partial sums

$$
\sum_{j=1}^{k} \lambda_{j}(\Omega), \quad k \in \mathbb{N}.
$$

These sums encode both the high-frequency distribution of the spectrum and refined geometric information on the domain.

For the classical Dirichlet Laplacian $-\Delta$, we write its eigenvalues as

$$
0<\lambda_{1}^{1}(\Omega) \leq \lambda_{2}^{1}(\Omega) \leq \cdots \uparrow+\infty.
$$

Weyl's law asserts that

$$
\lambda_{k}^{1}(\Omega) \sim C_{n}\left(\frac{k}{|\Omega|}\right)^{2 / n} \quad \text { as } k \rightarrow \infty,
$$

where

$$
C_{n}:=4 \pi^{2} \omega_{n}^{-2 / n},
$$

and $\omega_{n}$ denotes the volume of the unit ball in $\mathbb{R}^{n}$, given by $\omega_{n}=\frac{\pi^{n / 2}}{\Gamma\left(\frac{n}{2}+1\right)}$. Therefore,

$$
\sum_{j=1}^{k} \lambda_{j}^{1}(\Omega) \sim \frac{n}{n+2} C_{n}|\Omega|^{-2 / n} k^{1+2 / n}.
$$

At the level of lower bounds, Berezin's semiclassical argument \cite{Ber72} and the celebrated Li-Yau inequality \cite{LY83} yield the sharp leading-order estimate

$$
\sum_{j=1}^{k} \lambda_{j}^{1}(\Omega) \geq \frac{n}{n+2} C_{n}|\Omega|^{-2 / n} k^{1+2 / n},
$$

which is optimal in view of Weyl asymptotics.
This inequality was later refined by Melas, while universal upper bounds for Dirichlet eigenvalues were established by Cheng and Yang; see \cite{CY07, Mel03}. On the other hand, Kröger obtained an asymptotically sharp upper bound for the sums of the first $k$ Dirichlet eigenvalues of the Laplacian, see \cite{Kro94}.

Analogous questions for nonlocal operators have also been extensively studied. For a general overview of the fractional Laplacian and its different realizations,
we refer to \cite{DaoudLaamri} and references therein. For the Dirichlet fractional Laplacian $(-\Delta)^{s}, 0<s<1$, let

$$
0<\lambda_{1}^{s}(\Omega) \leq \lambda_{2}^{s}(\Omega) \leq \cdots \uparrow+\infty
$$

be its eigenvalues. The corresponding Weyl scale is

$$
\lambda_{k}^{s}(\Omega) \sim C_{n, s}\left(\frac{k}{|\Omega|}\right)^{2 s / n}, \quad \sum_{j=1}^{k} \lambda_{j}^{s}(\Omega) \sim \frac{n}{n+2 s} C_{n, s}|\Omega|^{-2 s / n} k^{1+2 s / n},
$$

where

$$
C_{n, s}:=(2 \pi)^{2 s} \omega_{n}^{-2 s / n}.
$$

In particular, the Berezin-Li-Yau type lower bound takes the form

$$
\sum_{j=1}^{k} \lambda_{j}^{s}(\Omega) \geq \frac{n}{n+2 s} C_{n, s}|\Omega|^{-2 s / n} k^{1+2 s / n}.
$$

Such lower bounds were established by Yolcu and Yolcu \cite{YY13}, and later sharpened in several directions.  On the other hand, Kröger-type upper bounds for the sums of the
first $k$ Dirichlet eigenvalues of the fractional Laplacian were
obtained by Wang, Chen and the second author, with the same Weyl
order $k^{1+2s/n}$; see \cite{WCH21}. We also refer to Frank for the corresponding Weyl asymptotics and a broader overview of eigenvalue bounds for fractional and related nonlocal operators; see \cite{Fra16, FG11}. For mixed fractional Laplacians, see also \cite{CBH22}.

A further singular limit of the fractional operator leads to the logarithmic Laplacian. We refer to \cite{CW19} for the basic theory of the Dirichlet problem for the logarithmic Laplacian and to \cite{CV23} for eigenvalue estimates. If

$$
\lambda_{1}^{\ln }(\Omega) \leq \lambda_{2}^{\ln }(\Omega) \leq \cdots \uparrow+\infty
$$

denote the eigenvalues of the Dirichlet logarithmic Laplacian $(-\Delta)^{\ln }$, whose Fourier symbol is $2 \ln |\xi|$, then the asymptotic behavior is fundamentally different from both the local and fractional cases: the growth is no longer governed by a pure power law, but instead by a logarithmic law,

$$
\lambda_{k}^{\ln }(\Omega) \sim \frac{2}{n} \ln k, \quad \sum_{j=1}^{k} \lambda_{j}^{\ln }(\Omega) \sim \frac{2}{n} k \ln k.
$$

In \cite{CV23}, Chen and Véron obtained lower and upper bounds for the sums of the first $k$ eigenvalues by adapting Li-Yau type and Kröger type arguments, and proved in particular that the leading asymptotic term of the eigenvalue sums is independent of the volume of the domain. This logarithmic regime provides a natural bridge between homogeneous nonlocal symbols and their logarithmic corrections.

The aim of the present paper is to study the corresponding spectral problem for the fractional-logarithmic Laplacian $(-\Delta)^{s+\ln }$, whose Fourier symbol is

$$
|\xi|^{2 s} \ln |\xi|^{2}.
$$

This operator was introduced very recently by R. Chen together with H. Chen and D. Hauer in \cite{CCH26}, where the fractional-logarithmic Laplacian was defined, for the first time, as the derivative of the fractional Laplacian with respect to the order parameter, and a systematic study of its basic properties was initiated. In particular, the authors established several equivalent realizations of $(-\Delta)^{s+\ln }$, including
the singular-integral formulation, the Fourier-multiplier representation, the spectral-calculus definition, and an extension characterization. They also developed the natural energy framework both on $\mathbb{R}^{n}$ and on bounded domains, proved embedding results and compactness properties, analyzed the Poisson problem, and studied the associated Dirichlet eigenvalue problem, including qualitative spectral properties and a Weyl-type asymptotic law for the counting function and for the $k$-th eigenvalue. More recently, in \cite{Che26}, R. Chen further developed the potential theory of the fractional-logarithmic Laplacian $(-\Delta)^{s+\ln }$ and of its inhomogeneous counterpart $(\lambda I-\Delta)^{s+\ln }$ with $\lambda>1$. In addition, that work established global regularity theory for solutions and the critical compact embedding theory for logarithmic Bessel spaces.

In [6], the authors established the existence of the eigenvalues
of the Dirichlet problem for $(-\Delta)^{s+\ln}$, denoted by
$$
\lambda_1^{s+\ln}(\Omega)
\leq\lambda_2^{s+\ln}(\Omega)
\leq\cdots\uparrow+\infty;
$$
see [6, Theorem 1.5]. Moreover, they proved that

$$
\lambda_{k}^{s+\ln }(\Omega) \sim \frac{2}{n}(2 \pi)^{2 s}\left(\omega_{n}|\Omega|\right)^{-2 s / n} k^{2 s / n} \ln k \quad \text { as } k \rightarrow \infty .
$$

The Weyl asymptotics above exhibit the mixed
	fractional-logarithmic nature of the operator: the power growth
	$k^{2s/n}$ inherited from the fractional Laplacian is accompanied
	by the logarithmic correction $\ln k$. Consequently, by summation,
	one is naturally led to the asymptotic formula
	(see Proposition~\ref{Proposition 2.3})
	$$
	\sum_{j=1}^{k}\lambda_j^{s+\ln}(\Omega)
	\sim
	\frac{2}{n+2s}(2\pi)^{2s}
	(\omega_n|\Omega|)^{-2s/n}
	k^{1+2s/n}\ln k
	\qquad\text{as }k\to\infty.
	$$

	The main purpose of the present paper is to obtain explicit lower
	and upper bounds for these eigenvalue sums with this sharp
	leading-order behavior. The lower estimate is obtained through a
	Berezin--Li--Yau type argument adapted to the non-monotone symbol
	$|\xi|^{2s}\ln|\xi|^2$, while the upper estimate is based on a
	Kröger-type construction combined with a boundary cutoff estimate.
	In particular, the upper-bound argument requires a careful control
	of the error produced by the cutoff near $\partial\Omega$.

%
%

To complement the theoretical analysis, we also include a
	one-dimensional numerical illustration on $\Omega=(0,1)$. We
	discretize the integral fractional Laplacian using conforming $P_1$
	finite elements and approximate the fractional-logarithmic stiffness
	matrix by a centered finite difference with respect to the order
	parameter. We compare the resulting discrete eigenvalue sums with the
	explicit lower bound and the principal asymptotic term, and we examine
	their sensitivity with respect to the finite-difference step and the
	mesh size. The numerical experiments are consistent with the predicted
	spectral growth and the correction order appearing in the theoretical
	estimates.

\section{ Lower Bound Estimate}

The ideas developed in this section are inspired by the classical Berezin-Li-Yau method and by its nonlocal variants. A key difficulty here is that the symbol $|\xi|^{2 s} \ln |\xi|^{2}$ is not globally monotone in the radial variable, due to its low-frequency behavior. Nevertheless, its high-frequency growth is sufficiently rigid to permit sharp control of the leading term. In this sense, the fractional-logarithmic case is intermediate between\\
the pure fractional case, where the symbol is homogeneous and monotone, and the pure logarithmic case, where the growth is much weaker and the optimization becomes more delicate.

\begin{definition}\label{Definition 2.1}
   Let $L$ be a positive measurable function defined on some neighborhood $[X, \infty)$ of infinity, where $X>0$. We say that $L$ is slowly varying at infinity (in the sense of Karamata) if for every $t>0$,

$$
\lim _{x \rightarrow \infty} \frac{L(t x)}{L(x)}=1.
$$
\end{definition}

This is the classical definition; see \cite[ Definition 1.2.1]{BGT89}. Typical examples of slowly varying functions at infinity are

$$
L(x)=(\ln x)^{\beta}, \quad L(x)=(\ln \ln x)^{\gamma}, \quad \beta, \gamma \in \mathbb{R},
$$

defined for all sufficiently large $x$.\\[0pt]
The following result is a standard Tauberian theorem of Karamata for power series; see \cite[Corollary 1.7.3]{BGT89}.

\begin{proposition}\label{Proposition 2.2}
 Let $\rho>0$, let $L$ be slowly varying at infinity, and let $\left\{a_{k}\right\}_{k \geq 0}$ be a nonnegative sequence which is ultimately monotone. Assume that

$$
a_{k} \sim \frac{c}{\Gamma(\rho)} k^{\rho-1} L(k) \quad \text { as } k \rightarrow \infty
$$

for some constant $c>0$. Then

$$
\sum_{j=0}^{k} a_{j} \sim \frac{c}{\Gamma(1+\rho)} k^{\rho} L(k) \quad \text { as } k \rightarrow \infty.
$$
\end{proposition}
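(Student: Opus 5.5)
Since Proposition~\ref{Proposition 2.2} is a standard result (Corollary 1.7.3 of \cite{BGT89}), the plan is to give a self-contained elementary argument. It does not need the full Tauberian machinery, because here the hypothesis concerns the terms $a_k$ and the conclusion concerns the partial sums. That is the easy (Abelian) direction of summation of regularly varying sequences. Monotonicity is not even needed for this direction; it matters only for the converse.

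\textbf{Step 1 (reduction).} Set $f(x)=\frac{c}{\Gamma(\rho)}x^{\rho-1}L(x)$ for $x\ge X$. By the Uniform Convergence Theorem for slowly varying functions \cite[Theorem 1.2.1]{BGT89}, $L(tx)/L(x)\to1$ uniformly for $t$ in compact subsets of $(0,\infty)$. Fix $\varepsilon\in(0,1)$. Then $\sum_{j=0}^{k}a_j=\sum_{j<\varepsilon k}a_j+\sum_{\varepsilon k\le j\le k}a_j$.

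\textbf{Step 2 (main block).} For $\varepsilon k\le j\le k$ we have $a_j=(1+o(1))\frac{c}{\Gamma(\rho)}j^{\rho-1}L(j)$, and uniform convergence gives $L(j)=(1+o(1))L(k)$ uniformly in this range. Hence the block equals
\[
(1+o(1))\frac{c}{\Gamma(\rho)}L(k)\sum_{\varepsilon k\le j\le k}j^{\rho-1}=(1+o(1))\frac{c}{\Gamma(\rho)}L(k)\,k^{\rho}\,\frac{1-\varepsilon^{\rho}}{\rho}.
\]
The last equality is a Riemann-sum comparison, valid since $t^{\rho-1}$ is monotone on $[\varepsilon,1]$. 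Since $\Gamma(\rho)\rho=\Gamma(1+\rho)$, this block is $(1-\varepsilon^\rho+o(1))\frac{c}{\Gamma(1+\rho)}k^\rho L(k)$.

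\textbf{Step 3 (small indices; main obstacle).} We must show $\limsup_k \frac{\sum_{j<\varepsilon k}a_j}{k^{\rho}L(k)}\le C\varepsilon^{\rho/2}$. Here uniformity fails near $0$, so Potter's bounds \cite[Theorem 1.5.6]{BGT89} are used instead. With $\delta=\rho/2$, there is $j_0$ such that $L(j)/L(k)\le 2(k/j)^{\delta}$ for $j_0\le j\le k$. Then
\[
\sum_{j_0\le j<\varepsilon k}a_j\le C\,L(k)\,k^{\delta}\sum_{j<\varepsilon k}j^{\rho-1-\delta}\le C' L(k)k^{\rho}\varepsilon^{\rho/2}.
\]
The finitely many terms with $j<j_0$ contribute $O(1)=o(k^\rho L(k))$, because $k^{\rho}L(k)\to\infty$ (again by Potter, as $\rho>0$).

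\textbf{Step 4.} Combining Steps 2 and 3 and letting $k\to\infty$ and then $\varepsilon\to0$ yields $\sum_{j\le k}a_j\sim\frac{c}{\Gamma(1+\rho)}k^\rho L(k)$.
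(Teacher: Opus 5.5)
Your argument is correct, but it takes a different route from the paper. The paper gives no proof: it simply invokes Karamata's Tauberian theorem for power series, \cite[Corollary 1.7.3]{BGT89}. That corollary says the Abel-mean asymptotics of $\sum_k a_k x^k$ as $x\uparrow 1$ are equivalent to the partial-sum asymptotics. For ultimately monotone $(a_k)$ and $\rho>0$, they are also equivalent to the termwise asymptotics. The ultimate-monotonicity hypothesis in Proposition~\ref{Proposition 2.2} is inherited from that formulation, where it is needed only for the monotone-density direction (partial sums $\Rightarrow$ terms).

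You instead prove directly the Abelian direction that Proposition~\ref{Proposition 2.3} actually uses. After splitting at $\varepsilon k$, the Uniform Convergence Theorem and a Riemann sum handle the block $[\varepsilon k,k]$. Potter's bounds make the initial segment $O(\varepsilon^{\rho/2})\,k^{\rho}L(k)+O(1)$, and $k^{\rho}L(k)\to\infty$ absorbs the $O(1)$. This is in effect the discrete analogue of Karamata's theorem for integrals of regularly varying functions.

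The citation is shorter and comes with the converse statements for free. Your proof is self-contained and relies only on two basic facts from Chapter~1 of \cite{BGT89}. It also shows that neither monotonicity nor nonnegativity is needed: eventual positivity follows from the asymptotic hypothesis, and finitely many terms of either sign contribute $O(1)$. Consequently, the passage to the tail sequence $\{\lambda_{m+k_0}^{s+\ln}(\Omega)\}$ in the proof of Proposition~\ref{Proposition 2.3}, made only to secure nonnegativity, would become unnecessary.
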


\begin{proposition}\label{Proposition 2.3}
   Assume that

$$
\lambda_{k}^{s+\ln }(\Omega) \sim A k^{2 s / n} \ln k \quad \text { as } k \rightarrow \infty
$$

where

$$
A=\frac{2}{n}(2 \pi)^{2 s}\left(\omega_{n}|\Omega|\right)^{-2 s / n}.
$$

Then

$$
\sum_{j=1}^{k} \lambda_{j}^{s+\ln }(\Omega) \sim \frac{2}{n+2 s}(2 \pi)^{2 s}\left(\omega_{n}|\Omega|\right)^{-2 s / n} k^{1+2 s / n} \ln k \quad \text { as } k \rightarrow \infty.
$$
\end{proposition}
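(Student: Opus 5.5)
We plan to obtain Proposition~\ref{Proposition 2.3} directly from the Karamata Tauberian statement, Proposition~\ref{Proposition 2.2}, taking $a_k=\lambda_k^{s+\ln}(\Omega)$ and $\rho=1+2s/n$, $L(k)=\ln k$. The first step is to check the hypotheses. The function $L(x)=\ln x$ is slowly varying by Definition~\ref{Definition 2.1}: for fixed $t>0$,
$$\frac{\ln(tx)}{\ln x}=1+\frac{\ln t}{\ln x}\to 1 .$$
The sequence $\{\lambda_k^{s+\ln}(\Omega)\}$ is monotone nondecreasing by construction. Since $\lambda_k^{s+\ln}(\Omega)\sim Ak^{2s/n}\ln k\to+\infty$, it is nonnegative for all $k$ beyond some index $k_0$. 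The Tauberian statement only concerns the tail, so we apply Proposition~\ref{Proposition 2.2} to the shifted sequence $a_j:=\lambda_{j+k_0}^{s+\ln}(\Omega)$. This sequence is nonnegative and monotone, and its asymptotics are unchanged, because $(j+k_0)^{2s/n}\ln(j+k_0)\sim j^{2s/n}\ln j$. We also set $a_0$ to any harmless nonnegative value.

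Next we match the constants. We need $a_k\sim \frac{c}{\Gamma(\rho)}k^{\rho-1}L(k)$ with $\rho-1=2s/n$, so we choose $c=A\,\Gamma(1+2s/n)$. Proposition~\ref{Proposition 2.2} then gives
$$\sum_{j\le k}a_j\sim \frac{A\,\Gamma(\rho)}{\Gamma(1+\rho)}\,k^{\rho}\ln k=\frac{A}{\rho}\,k^{1+2s/n}\ln k ,$$
using $\Gamma(1+\rho)=\rho\,\Gamma(\rho)$. Substituting the value of $A$,
$$\frac{A}{\rho}=\frac{2}{n}\cdot\frac{n}{n+2s}(2\pi)^{2s}(\omega_n|\Omega|)^{-2s/n}=\frac{2}{n+2s}(2\pi)^{2s}(\omega_n|\Omega|)^{-2s/n},$$
which is exactly the claimed constant.

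Finally, we return to the original sum $\sum_{j=1}^k\lambda_j^{s+\ln}(\Omega)$. It differs from $\sum_{j\le k-k_0}a_j$ by a fixed finite number of terms, which is $O(1)$. Replacing $k-k_0$ by $k$ in $k^{1+2s/n}\ln k$ changes it only by a factor tending to $1$. Both corrections are negligible relative to $k^{1+2s/n}\ln k$, so the asymptotic equivalence follows.

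The main obstacle is only this bookkeeping about possibly negative early eigenvalues, since the symbol is negative for $|\xi|<1$. Alternatively, one can avoid the Tauberian theorem altogether with a Stolz--Cesàro argument: the ratio of increments $\lambda_k^{s+\ln}(\Omega)\big/\big[k^{\rho}\ln k-(k-1)^{\rho}\ln(k-1)\big]$ tends to $A/\rho$, because the denominator is $\sim\rho k^{\rho-1}\ln k$ by the mean value theorem. This gives the same conclusion.
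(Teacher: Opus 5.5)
Your proposal is correct and follows essentially the same route as the paper: apply Karamata's result (Proposition~\ref{Proposition 2.2}) to the nonnegative, monotone shifted tail $\lambda_{m+k_0}^{s+\ln}(\Omega)$ with $\rho=1+2s/n$, $L=\ln$ and $c=A\,\Gamma(\rho)$, use $\Gamma(1+\rho)=\rho\,\Gamma(\rho)$ to obtain the constant $A/\rho$, and discard the finitely many initial terms. Your Stolz--Ces\`aro alternative is also valid and more elementary, since this direction (from term asymptotics to partial-sum asymptotics) does not need the monotonicity hypothesis.
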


\begin{proof}
   Set

$$
a_{k}:=\lambda_{k}^{s+\ln }(\Omega), \quad L(k):=\ln k, \quad \rho:=1+\frac{2 s}{n}.
$$

Since

$$
a_{k} \sim A k^{2 s / n} \ln k \quad \text { with } A>0,
$$

there exists $k_{0} \in \mathbb{N}$ such that $a_{k}>0$ for all $k \geq k_{0}$. Moreover, since $\left\{\lambda_{k}^{s+\ln }(\Omega)\right\}_{k \geq 1}$ is nondecreasing, the tail sequence $\left\{a_{k}\right\}_{k \geq k_{0}}$ is nonnegative and ultimately monotone. Hence Proposition \ref{Proposition 2.2} applies to the shifted sequence

$$
b_{m}:=a_{m+k_{0}}=\lambda_{m+k_{0}}^{s+\ln }(\Omega), \quad m \geq 0.
$$

Since

$$
b_{m} \sim A m^{2 s / n} \ln m=A m^{\rho-1} L(m),
$$

we may write

$$
b_{m} \sim \frac{c}{\Gamma(\rho)} m^{\rho-1} L(m), \quad c:=A \Gamma(\rho).
$$

Therefore,

$$
\sum_{m=0}^{k} b_{m} \sim \frac{c}{\Gamma(1+\rho)} k^{\rho} L(k).
$$

Using $\Gamma(1+\rho)=\rho \Gamma(\rho)$, we obtain

$$
\frac{c}{\Gamma(1+\rho)}=\frac{A}{\rho}=\frac{A}{1+\frac{2 s}{n}}=\frac{n A}{n+2 s} .
$$

Thus

$$
\sum_{m=0}^{k} b_{m} \sim \frac{n A}{n+2 s} k^{1+2 s / n} \ln k.
$$

Finally,

$$
\sum_{j=1}^{k} \lambda_{j}^{s+\ln }(\Omega)=\sum_{j=1}^{k_{0}-1} \lambda_{j}^{s+\ln }(\Omega)+\sum_{m=0}^{k-k_{0}} \lambda_{m+k_{0}}^{s+\ln }(\Omega).
$$

The first term is constant, hence negligible, and replacing $k-k_{0}$ by $k$ does not affect the asymptotics. Therefore,

$$
\sum_{j=1}^{k} \lambda_{j}^{s+\ln }(\Omega) \sim \frac{n A}{n+2 s} k^{1+2 s / n} \ln k.
$$

Substituting the value of $A$, we conclude that

$$
\sum_{j=1}^{k} \lambda_{j}^{s+\ln }(\Omega) \sim \frac{2}{n+2 s}(2 \pi)^{2 s}\left(\omega_{n}|\Omega|\right)^{-2 s / n} k^{1+2 s / n} \ln k,
$$

as claimed.
\end{proof}

\begin{lemma}\label{Lemma 2.4}
   Let $s>0, M_{1}>0$, and let $f: \mathbb{R}^{n} \rightarrow\left[0, M_{1}\right]$ be measurable. Define

$$
A:=\int_{\mathbb{R}^{n}} f(z) d z \quad \text { and } \quad M_{2}:=\int_{\mathbb{R}^{n}}|z|^{2 s} \ln |z|^{2} f(z) d z.
$$

Then

$$
M_{2} \geq-\frac{2 n \omega_{n}}{(n+2 s)^{2}} M_{1}.
$$

Moreover, assume that

$$
A \geq M_{1} \omega_{n}
$$

where $\omega_{n}=\left|B_{1}\right|$ denotes the volume of the unit ball in $\mathbb{R}^{n}$. Then

$$
M_{2} \geq \frac{2}{n+2 s}\left(M_{1} \omega_{n}\right)^{-2 s / n} A^{1+2 s / n}\left(\ln \frac{A}{M_{1} \omega_{n}}-\frac{n}{n+2 s}\right).
$$

In particular, as $A \rightarrow \infty$,

$$
M_{2} \geq \frac{2}{n+2 s}\left(M_{1} \omega_{n}\right)^{-2 s / n} A^{1+2 s / n} \ln A+O\left(A^{1+2 s / n}\right).
$$
\end{lemma}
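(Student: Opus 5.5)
Write $\phi(z):=|z|^{2s}\ln|z|^2$. The only negative part of $\phi$ lives in the unit ball, and the function values are capped by $M_1$; the plan is a bathtub (rearrangement) argument that exploits both facts.

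\emph{First inequality.} Since $0\le f\le M_1$ and $\phi<0$ exactly on $B_1$, we have
\[
M_2\ \ge\ \int_{B_1}\phi f\,dz\ \ge\ M_1\int_{B_1}\phi\,dz .
\]
In polar coordinates,
\[
\int_{B_1}\phi\,dz=n\omega_n\int_0^1 r^{2s+n-1}\,2\ln r\,dr .
\]
The elementary identity $\int_0^1 r^{a-1}\ln r\,dr=-1/a^2$ with $a=n+2s$ then gives the value $-2n\omega_n/(n+2s)^2$. This proves the first bound.

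\emph{Second inequality.} The radial profile $g(r)=r^{2s}\ln r^2$ is not monotone: it decreases on $(0,e^{-1/(2s)})$ and increases afterwards. Therefore I will not rely on monotonicity. Instead I use the comparison $\phi(z)\ge \phi_R(z)$, where $R$ is defined by $M_1\omega_nR^n=A$ (so $R\ge1$). I then run the Li--Yau trick with the linear function of the "level" directly:
\[
\phi(z)-g(R)\ \ge\ c\,(|z|^n-R^n), \qquad c:=\frac{g'(R)}{nR^{n-1}} .
\]
This holds provided $h(t):=g(t^{1/n})$ is convex in $t=|z|^n$ on the region where it matters. If convexity fails, the fallback is a direct comparison: the choice $f=M_1\chi_{B_R}$ minimizes $\int\phi f$ among admissible $f$ with given mass $A$, as long as $\phi\ge g(R)$ outside $B_R$ and $\phi\le g(R)$ inside $B_R$. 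Inside this holds because $g\le g(R)$ on $[0,R]$: indeed $g<0$ on $(0,1)$, and $g$ is increasing on $[1,R]$ with $g(R)\ge0$. Outside it holds because $g$ is increasing beyond $1$.

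The standard bathtub inequality then goes through:
\[
\int(\phi-g(R))(f-M_1\chi_{B_R})\,dz\ \ge\ 0,
\]
since both factors have the same sign pointwise. Together with $\int f=\int M_1\chi_{B_R}$, this yields
\[
M_2\ \ge\ M_1\int_{B_R}\phi\,dz .
\]
This sign-matching step is precisely what handles the non-monotonicity near the origin, and it is the only delicate point. It also uses $A\ge M_1\omega_n$, which is what guarantees $R\ge1$.

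\emph{Computation.} It remains to evaluate
\[
\int_{B_R}\phi\,dz = n\omega_n\int_0^R r^{n+2s-1}\,2\ln r\,dr
= \frac{2n\omega_n}{n+2s}R^{n+2s}\Bigl(\ln R-\frac1{n+2s}\Bigr).
\]
Substitute $R^n=A/(M_1\omega_n)$, so that $\ln R=\frac1n\ln\frac{A}{M_1\omega_n}$. Multiplying by $M_1$ gives
\[
M_1\omega_n R^{n+2s}=A\Bigl(\frac{A}{M_1\omega_n}\Bigr)^{2s/n}.
\]
Collecting constants,
\[
\frac{2n}{n+2s}\cdot\frac1n\Bigl(\ln\frac{A}{M_1\omega_n}-\frac{n}{n+2s}\Bigr),
\]
which produces exactly the stated bound.

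\emph{Asymptotic form.} The final "in particular" follows by expanding $\ln\frac{A}{M_1\omega_n}=\ln A-\ln(M_1\omega_n)$ and absorbing the constant terms into $O(A^{1+2s/n})$.
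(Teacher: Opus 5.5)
Your argument is correct and is essentially the paper's. Once the convexity/tangent-line detour is dropped, what remains is exactly the paper's pointwise sign inequality $(\phi-g(R))(f-M_1\chi_{B_R})\ge 0$ for $R\ge 1$, with the only cosmetic difference that you fix $R$ by $M_1\omega_nR^n=A$ from the outset so the $g(R)$ terms cancel, whereas the paper keeps $R\ge1$ free, obtains $M_2\ge\Psi_A(R)$, and recovers the same $R_A$ by maximizing $\Psi_A$. The detour is not needed and actually fails for $2s<n$, since $t\mapsto g(t^{1/n})$ then grows like $t^{2s/n}\ln t$ and cannot stay above a line of positive slope.
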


\begin{proof}
   Since $|z|^{2 s} \ln |z|^{2} \geq 0$ for $|z| \geq 1$, we have

$$
M_{2}=\int_{B_{1}}|z|^{2 s} \ln |z|^{2} f(z) d z+\int_{B_{1}^{c}}|z|^{2 s} \ln |z|^{2} f(z) d z \geq \int_{B_{1}}|z|^{2 s} \ln |z|^{2} f(z) d z.
$$

Now, on $B_{1}$ one has $|z|^{2 s} \ln |z|^{2} \leq 0$, and since $0 \leq f \leq M_{1}$, it follows that

$$
\int_{B_{1}}|z|^{2 s} \ln |z|^{2} f(z) d z \geq M_{1} \int_{B_{1}}|z|^{2 s} \ln |z|^{2} d z.
$$

Using polar coordinates,

$$
\int_{B_{1}}|z|^{2 s} \ln |z|^{2} d z=n \omega_{n} \int_{0}^{1} r^{n+2 s-1} \ln r^{2} d r=-\frac{2 n \omega_{n}}{(n+2 s)^{2}}
$$

Therefore,

$$
M_{2} \geq-\frac{2 n \omega_{n}}{(n+2 s)^{2}} M_{1}.
$$

Set

$$
w(r):=r^{2 s} \ln r^{2}, \quad r>0.
$$

We first note that $w$ is strictly increasing on $[1, \infty)$. Indeed,

$$
w^{\prime}(r)=2 r^{2 s-1}(2 s \ln r+1)>0, \quad r \geq 1.
$$

Moreover, for $0<r<1$ we have $w(r)<0$, and hence for every $R \geq 1$,

$$
w(r) \leq w(R), \quad 0<r<R.
$$

Therefore, for every $R \geq 1$ and every $z \in \mathbb{R}^{n}$,

$$
(w(|z|)-w(R))\left(f(z)-M_{1} \mathbf{1}_{B_{R}}(z)\right) \geq 0.
$$

Indeed, if $|z|<R$, then $w(|z|) \leq w(R)$ and $f(z)-M_{1} \leq 0$; if $|z| \geq R$, then $w(|z|) \geq w(R)$ and $f(z) \geq 0$.\\
Integrating the above inequality over $\mathbb{R}^{n}$, we obtain

$$
\int_{\mathbb{R}^{n}}(w(|z|)-w(R))\left(f(z)-M_{1} \mathbf{1}_{B_{R}}(z)\right) d z \geq 0.
$$

Expanding this inequality gives

$$
M_{2}-w(R) A-M_{1} \int_{B_{R}} w(|z|) d z+M_{1} w(R)\left|B_{R}\right| \geq 0.
$$

Hence, for every $R \geq 1$,

$$
M_{2} \geq \Psi_{A}(R),
$$

where

$$
\Psi_{A}(R):=A w(R)-M_{1}\left(w(R)\left|B_{R}\right|-\int_{B_{R}} w(|z|) d z\right).
$$

We now optimize in $R$. Since $\left|B_{R}\right|=\omega_{n} R^{n}$, we have

$$
\frac{d}{d R} \int_{B_{R}} w(|z|) d z=n \omega_{n} R^{n-1} w(R),
$$

and

$$
\frac{d}{d R}\left(w(R)\left|B_{R}\right|\right)=w^{\prime}(R) \omega_{n} R^{n}+n \omega_{n} R^{n-1} w(R).
$$

Therefore,

$$
\Psi_{A}^{\prime}(R)=w^{\prime}(R)\left(A-M_{1} \omega_{n} R^{n}\right).
$$

Since $w^{\prime}(R)>0$ for $R \geq 1$, it follows that $\Psi_{A}$ attains its maximum at

$$
R_{A}:=\left(\frac{A}{M_{1} \omega_{n}}\right)^{1 / n},
$$

provided $R_{A} \geq 1$, namely provided $A \geq M_{1} \omega_{n}$, which is our assumption. Thus,

$$
M_{2} \geq \Psi_{A}\left(R_{A}\right).
$$

Because $M_{1}\left|B_{R_{A}}\right|=M_{1} \omega_{n} R_{A}^{n}=A$, we have

$$
\Psi_{A}\left(R_{A}\right)=M_{1} \int_{B_{R_{A}}} w(|z|) d z.
$$

It remains to compute the integral. By polar coordinates,

$$
\int_{B_{R}}|z|^{2 s} \ln |z|^{2} d z=n \omega_{n} \int_{0}^{R} r^{n+2 s-1} \ln r^{2} d r.
$$

A direct computation yields

$$
\int_{0}^{R} r^{n+2 s-1} \ln r^{2} d r=R^{n+2 s}\left(\frac{\ln R^{2}}{n+2 s}-\frac{2}{(n+2 s)^{2}}\right).
$$

Hence

$$
\int_{B_{R}}|z|^{2 s} \ln |z|^{2} d z=n \omega_{n} R^{n+2 s}\left(\frac{\ln R^{2}}{n+2 s}-\frac{2}{(n+2 s)^{2}}\right) .
$$

Substituting $R=R_{A}$, we obtain

$$
M_{2} \geq n \omega_{n} M_{1} R_{A}^{n+2 s}\left(\frac{\ln R_{A}^{2}}{n+2 s}-\frac{2}{(n+2 s)^{2}}\right).
$$

Since

$$
R_{A}^{n}=\frac{A}{M_{1} \omega_{n}}, \quad R_{A}^{n+2 s}=\left(\frac{A}{M_{1} \omega_{n}}\right)^{1+2 s / n}, \quad \ln R_{A}^{2}=\frac{2}{n} \ln \frac{A}{M_{1} \omega_{n}},
$$

it follows that

$$
M_{2} \geq \frac{2}{n+2 s}\left(M_{1} \omega_{n}\right)^{-2 s / n} A^{1+2 s / n}\left(\ln \frac{A}{M_{1} \omega_{n}}-\frac{n}{n+2 s}\right).
$$

This proves the claimed lower bound. Finally, expanding the term inside the parentheses

$$
M_{2} \geq \frac{2}{n+2 s}\left(M_{1} \omega_{n}\right)^{-2 s / n} A^{1+2 s / n} \ln A+O\left(A^{1+2 s / n}\right) \quad \text { as } A \rightarrow \infty,
$$

which completes the proof.
\end{proof}

\begin{theorem}\label{Theorem 2.5}
  Let $\Omega \subset \mathbb{R}^{n}$ be a bounded domain, and let

$$
\lambda_{1}^{s+\ln }(\Omega) \leq \lambda_{2}^{s+\ln }(\Omega) \leq \cdots \uparrow+\infty
$$

be the Dirichlet eigenvalues of $(-\Delta)^{s+\ln }$. Then, for every integer $k \geq 1$, we have

$$
\sum_{j=1}^{k} \lambda_{j}^{s+\ln }(\Omega) \geq-(2 \pi)^{-n} \frac{2 n \omega_{n}}{(n+2 s)^{2}}|\Omega|,
$$

and if $k \geq(2 \pi)^{-n} \omega_{n}|\Omega|$, then

$$
\sum_{j=1}^{k} \lambda_{j}^{s+\ln }(\Omega) \geq \frac{2}{n+2 s}(2 \pi)^{2 s}\left(\omega_{n}|\Omega|\right)^{-2 s / n} k^{1+2 s / n}\left(\ln \frac{(2 \pi)^{n} k}{\omega_{n}|\Omega|}-\frac{n}{n+2 s}\right).
$$
\end{theorem}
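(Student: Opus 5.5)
The plan is the standard Berezin--Li--Yau argument, with Lemma~\ref{Lemma 2.4} supplying the rearrangement step for the non-monotone symbol.

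Let $\{u_j\}_{j\ge1}$ be an $L^2(\Omega)$-orthonormal family of Dirichlet eigenfunctions, with $(-\Delta)^{s+\ln}u_j=\lambda_j^{s+\ln}(\Omega)u_j$, extended by zero outside $\Omega$. Using the Fourier-multiplier realization from \cite{CCH26}, with the normalization $\hat u(\xi)=(2\pi)^{-n/2}\int e^{-ix\cdot\xi}u(x)\,dx$, we have
$$
\lambda_j^{s+\ln}(\Omega)=\int_{\mathbb R^n}|\xi|^{2s}\ln|\xi|^2\,|\hat u_j(\xi)|^2\,d\xi .
$$
Set $F(\xi):=\sum_{j=1}^k|\hat u_j(\xi)|^2$. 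Summing over $j$ gives $\sum_{j=1}^k\lambda_j^{s+\ln}(\Omega)=\int|\xi|^{2s}\ln|\xi|^2F(\xi)\,d\xi$.

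Next we record the two properties of $F$. By Plancherel, $\int F=k$. For the pointwise bound, $\hat u_j(\xi)=(2\pi)^{-n/2}\langle u_j,e_\xi\rangle_{L^2(\Omega)}$ with $e_\xi(x)=e^{ix\cdot\xi}\mathbf 1_\Omega(x)$. Bessel's inequality for the orthonormal system $\{u_j\}$ then gives
$$
F(\xi)\le(2\pi)^{-n}\|e_\xi\|_{L^2(\Omega)}^2=(2\pi)^{-n}|\Omega|=:M_1 .
$$
So $f=F$ satisfies the hypotheses of Lemma~\ref{Lemma 2.4} with $A=k$ and $M_1=(2\pi)^{-n}|\Omega|$.

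The first claimed inequality is then the first conclusion of the lemma, $M_2\ge-\frac{2n\omega_n}{(n+2s)^2}M_1$, with this value of $M_1$. The condition $k\ge(2\pi)^{-n}\omega_n|\Omega|$ is exactly $A\ge M_1\omega_n$, so the second conclusion of the lemma applies. It remains to simplify constants:
$$
(M_1\omega_n)^{-2s/n}=(2\pi)^{2s}(\omega_n|\Omega|)^{-2s/n},\qquad \frac{A}{M_1\omega_n}=\frac{(2\pi)^nk}{\omega_n|\Omega|},
$$
which gives the stated bound.

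The main obstacle is not the optimization, since Lemma~\ref{Lemma 2.4} already handles the non-monotonicity of the symbol. It is justifying the Fourier representation of the quadratic form on the energy space of \cite{CCH26}. The symbol is negative for $|\xi|<1$, so the form is not a priori absolutely convergent in the usual sense. We will argue that for $u\in L^2$ with compact support, $\hat u$ is bounded, so the integral over $B_1$ converges absolutely. The integral over $B_1^c$ is nonnegative and finite for functions in the energy space, so the identity, and the interchange of the finite sum over $j$ with the integral, are legitimate.
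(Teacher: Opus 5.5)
Your proposal is correct and follows the paper's proof essentially step for step: the same function $f=\sum_{j\le k}|\widehat{\phi_j}|^2$, Bessel's inequality giving $M_1=(2\pi)^{-n}|\Omega|$, Plancherel giving $A=k$, and then both conclusions of Lemma~\ref{Lemma 2.4}. Two things you add that the paper leaves implicit are the statement that the first inequality is simply the lemma's first conclusion, and the justification of the Fourier representation of the quadratic form, using absolute convergence on $B_1$ because $\widehat{u}$ is bounded for compactly supported $u$.
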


\begin{proof} Let $\left\{\phi_{j}\right\}_{j \geq 1}$ be an orthonormal family of eigenfunctions in $L^{2}(\Omega)$ associated with $\lambda_{j}^{s+\ln }(\Omega)$, extended by zero outside $\Omega$. We adopt the Fourier transform

$$
\widehat{u}(\xi):=(2 \pi)^{-n / 2} \int_{\mathbb{R}^{n}} u(x) e^{-i x \cdot \xi} d x, \quad \xi \in \mathbb{R}^{n}.
$$

Define

$$
f(\xi):=\sum_{j=1}^{k}\left|\widehat{\phi_{j}}(\xi)\right|^{2}, \quad \xi \in \mathbb{R}^{n}.
$$

Clearly, $f(\xi) \geq 0$ for every $\xi \in \mathbb{R}^{n}$. For each fixed $\xi \in \mathbb{R}^{n}$, we have

$$
\widehat{\phi_{j}}(\xi)=(2 \pi)^{-n / 2} \int_{\mathbb{R}^{n}} \phi_{j}(x) e^{-i x \cdot \xi} d x=(2 \pi)^{-n / 2} \int_{\Omega} \phi_{j}(x) e^{-i x \cdot \xi} d x=\left\langle\phi_{j},(2 \pi)^{-n / 2} e^{-i x \cdot \xi}\right\rangle_{L^{2}(\Omega)},
$$

since each $\phi_{j}$ vanishes outside $\Omega$. Therefore,

$$
f(\xi)=\sum_{j=1}^{k}\left|\left\langle\phi_{j},(2 \pi)^{-n / 2} e^{-i x \cdot \xi}\right\rangle_{L^{2}(\Omega)}\right|^{2} .
$$

Because $\left\{\phi_{j}\right\}_{j \geq 1}$ is an orthonormal family in $L^{2}(\Omega)$, Bessel's inequality yields

$$
f(\xi) \leq\left\|(2 \pi)^{-n / 2} e^{-i x \cdot \xi}\right\|_{L^{2}(\Omega)}^{2}.
$$

Since $\left|e^{-i x \cdot \xi}\right|=1$ for every $x, \xi \in \mathbb{R}^{n}$, we obtain

$$
\left\|(2 \pi)^{-n / 2} e^{-i x \cdot \xi}\right\|_{L^{2}(\Omega)}^{2}=(2 \pi)^{-n} \int_{\Omega} 1 d x=(2 \pi)^{-n}|\Omega|.
$$

Hence, for every $\xi \in \mathbb{R}^{n}$,

$$
0 \leq f(\xi) \leq(2 \pi)^{-n}|\Omega| .
$$

Thus we may pick 

$$
M_{1}:=(2 \pi)^{-n}|\Omega|.
$$

Next, by Plancherel's theorem,

$$
\int_{\mathbb{R}^{n}} f(\xi) d \xi=\sum_{j=1}^{k} \int_{\mathbb{R}^{n}}\left|\widehat{\phi_{j}}(\xi)\right|^{2} d \xi=\sum_{j=1}^{k} \int_{\Omega}\left|\phi_{j}(x)\right|^{2} d x=k.
$$

Therefore,

$$
A:=\int_{\mathbb{R}^{n}} f(\xi) d \xi=k.
$$

Moreover, since under the above Fourier convention the operator $(-\Delta)^{s+\ln }$ has symbol $|\xi|^{2 s} \ln |\xi|^{2}$, we have

$$
\lambda_{j}^{s+\ln }(\Omega)=\int_{\mathbb{R}^{n}}|\xi|^{2 s} \ln |\xi|^{2}\left|\widehat{\phi_{j}}(\xi)\right|^{2} d \xi.
$$

Summing over $j=1, \ldots, k$, we obtain

$$
\sum_{j=1}^{k} \lambda_{j}^{s+\ln }(\Omega)=\int_{\mathbb{R}^{n}}|\xi|^{2 s} \ln |\xi|^{2} f(\xi) d \xi.
$$

Thus, with

$$
M_{2}:=\int_{\mathbb{R}^{n}}|\xi|^{2 s} \ln |\xi|^{2} f(\xi) d \xi=\sum_{j=1}^{k} \lambda_{j}^{s+\ln }(\Omega)
$$

all the assumptions of Lemma \ref{Lemma 2.4} are satisfied provided

$$
A=k \geq M_{1} \omega_{n}=(2 \pi)^{-n} \omega_{n}|\Omega| .
$$

Applying Lemma \ref{Lemma 2.4} with

$$
M_{1}=(2 \pi)^{-n}|\Omega| \quad \text { and } \quad A=k,
$$

we obtain

$$
\sum_{j=1}^{k} \lambda_{j}^{s+\ln }(\Omega)=M_{2} \geq \frac{2}{n+2 s}\left((2 \pi)^{-n} \omega_{n}|\Omega|\right)^{-2 s / n} k^{1+2 s / n}\left(\ln \frac{k}{(2 \pi)^{-n} \omega_{n}|\Omega|}-\frac{n}{n+2 s}\right).
$$

Since

$$
\left((2 \pi)^{-n} \omega_{n}|\Omega|\right)^{-2 s / n}=(2 \pi)^{2 s}\left(\omega_{n}|\Omega|\right)^{-2 s / n},
$$

this becomes

$$
\sum_{j=1}^{k} \lambda_{j}^{s+\ln }(\Omega) \geq \frac{2}{n+2 s}(2 \pi)^{2 s}\left(\omega_{n}|\Omega|\right)^{-2 s / n} k^{1+2 s / n}\left(\ln \frac{(2 \pi)^{n} k}{\omega_{n}|\Omega|}-\frac{n}{n+2 s}\right).
$$

This proves the claim.
\end{proof}

\begin{remark}\label{ Remark 2.6}
   The lower bound in Theorem \ref{Theorem 2.5} has the correct leading-order growth. Indeed, by separating the $\ln k$ term, we obtain

$$
\begin{aligned}
\sum_{j=1}^{k} \lambda_{j}^{s+\ln }(\Omega) \geq & \frac{2}{n+2 s}(2 \pi)^{2 s}\left(\omega_{n}|\Omega|\right)^{-2 s / n} k^{1+2 s / n} \ln k \\
& +\frac{2}{n+2 s}(2 \pi)^{2 s}\left(\omega_{n}|\Omega|\right)^{-2 s / n}\left(\ln \frac{(2 \pi)^{n}}{\omega_{n}|\Omega|}-\frac{n}{n+2 s}\right) k^{1+2 s / n}.
\end{aligned}
$$

Hence,

$$
\sum_{j=1}^{k} \lambda_{j}^{s+\ln }(\Omega) \geq \frac{2}{n+2 s}(2 \pi)^{2 s}\left(\omega_{n}|\Omega|\right)^{-2 s / n} k^{1+2 s / n} \ln k+O\left(k^{1+2 s / n}\right) \quad \text { as } k \rightarrow \infty .
$$

In particular, the principal term of the above lower bound coincides with the expected asymptotic order

$$
k^{1+2 s / n} \ln k,
$$

and the leading constant is exactly

$$
\frac{2}{n+2 s}(2 \pi)^{2 s}\left(\omega_{n}|\Omega|\right)^{-2 s / n}.
$$
\end{remark}

\begin{corollary} \label{Corollary 2.7} (i) If

$$
k>\frac{\omega_{n}|\Omega|}{(2 \pi)^{n}} \exp \left(\frac{n}{n+2 s}\right),
$$

then

$$
\sum_{j=1}^{k} \lambda_{j}^{s+\ln }(\Omega)>0.
$$

In particular,

$$
\lambda_{k}^{s+\ln }(\Omega)>0 .
$$

(ii) If

$$
|\Omega|<\frac{(2 \pi)^{n}}{\omega_{n}} \exp \left(-\frac{n}{n+2 s}\right),
$$

then

$$
\lambda_{1}^{s+\ln }(\Omega)>0 .
$$
\end{corollary}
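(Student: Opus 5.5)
The plan is to read both parts off the second inequality of Theorem~\ref{Theorem 2.5}, using only that the eigenvalues are nondecreasing.

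For (i), set $k_*:=\frac{\omega_n|\Omega|}{(2\pi)^n}\exp\!\left(\frac{n}{n+2s}\right)$ and fix $k>k_*$. Since $\exp\!\left(\frac{n}{n+2s}\right)>1$, such a $k$ satisfies $k>(2\pi)^{-n}\omega_n|\Omega|$. So the hypothesis of the second estimate in Theorem~\ref{Theorem 2.5} holds and we get
$$
\sum_{j=1}^{k}\lambda_j^{s+\ln}(\Omega)\geq \frac{2}{n+2s}(2\pi)^{2s}(\omega_n|\Omega|)^{-2s/n}k^{1+2s/n}\left(\ln\frac{(2\pi)^n k}{\omega_n|\Omega|}-\frac{n}{n+2s}\right).
$$
The prefactor is strictly positive. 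The bracket is strictly positive exactly when $\frac{(2\pi)^nk}{\omega_n|\Omega|}>e^{n/(n+2s)}$, which is the condition $k>k_*$. Hence the sum is strictly positive.

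To pass to $\lambda_k^{s+\ln}(\Omega)$, I use monotonicity. Since $\lambda_j^{s+\ln}(\Omega)\le\lambda_k^{s+\ln}(\Omega)$ for $j\le k$, we have $k\,\lambda_k^{s+\ln}(\Omega)\ge\sum_{j=1}^k\lambda_j^{s+\ln}(\Omega)>0$, and so $\lambda_k^{s+\ln}(\Omega)>0$.

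For (ii), the hypothesis on $|\Omega|$ is equivalent to $\frac{\omega_n|\Omega|}{(2\pi)^n}\exp\!\left(\frac{n}{n+2s}\right)<1$, that is, $k=1$ satisfies the condition in (i). Applying (i) with $k=1$ gives $\lambda_1^{s+\ln}(\Omega)>0$.

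There is no real obstacle here. The only point to check is that the threshold in (i) already implies the hypothesis $k\ge(2\pi)^{-n}\omega_n|\Omega|$ of Theorem~\ref{Theorem 2.5}, and this follows from $e^{n/(n+2s)}>1$.
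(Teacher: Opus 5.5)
Your proof is correct and follows the paper's argument. You apply the second bound of Theorem~\ref{Theorem 2.5}, observe that the bracket is positive exactly when $k$ exceeds the threshold, and pass to $\lambda_k^{s+\ln}(\Omega)>0$ by monotonicity; your inequality $k\,\lambda_k^{s+\ln}(\Omega)\geq\sum_{j\le k}\lambda_j^{s+\ln}(\Omega)$ is simply the direct form of the paper's contradiction argument. You then take $k=1$ for (ii), and your explicit check that $e^{n/(n+2s)}>1$ places $k$ in the admissible range of Theorem~\ref{Theorem 2.5} is a detail the paper leaves implicit.
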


\begin{proof}
  (i) By Theorem \ref{Theorem 2.5},

$$
\sum_{j=1}^{k} \lambda_{j}^{s+\ln }(\Omega) \geq \frac{2}{n+2 s}(2 \pi)^{2 s}\left(\omega_{n}|\Omega|\right)^{-2 s / n} k^{1+2 s / n}\left(\ln \frac{(2 \pi)^{n} k}{\omega_{n}|\Omega|}-\frac{n}{n+2 s}\right).
$$

The prefactor is positive, so the right-hand side is strictly positive whenever

$$
\ln \frac{(2 \pi)^{n} k}{\omega_{n}|\Omega|}-\frac{n}{n+2 s}>0,
$$

that is,

$$
k>\frac{\omega_{n}|\Omega|}{(2 \pi)^{n}} \exp \left(\frac{n}{n+2 s}\right).
$$

Hence

$$
\sum_{j=1}^{k} \lambda_{j}^{s+\ln }(\Omega)>0.
$$

Since the eigenvalues are arranged in nondecreasing order, if

$$
\lambda_{k}^{s+\ln }(\Omega) \leq 0,
$$

then

$$
\lambda_{j}^{s+\ln }(\Omega) \leq 0 \quad \text { for all } 1 \leq j \leq k,
$$

and therefore

$$
\sum_{j=1}^{k} \lambda_{j}^{s+\ln }(\Omega) \leq 0,
$$

a contradiction. Thus

$$
\lambda_{k}^{s+\ln }(\Omega)>0.
$$

(ii) The assumption

$$
|\Omega|<\frac{(2 \pi)^{n}}{\omega_{n}} \exp \left(-\frac{n}{n+2 s}\right)
$$

is equivalent to

$$
1>\frac{\omega_{n}|\Omega|}{(2 \pi)^{n}} \exp \left(\frac{n}{n+2 s}\right).
$$

Applying part (i) with $k=1$, we obtain

$$
\lambda_{1}^{s+\ln }(\Omega)>0 .
$$
\end{proof}

\section{ Upper Bound Estimate}

Let $\eta_{0} \in C^{2}(\mathbb{R})$ be an increasing cutoff such that

$$
0 \leq \eta_{0} \leq 1, \quad \eta_{0}(t)=0 \text { for } t \leq 0, \quad \eta_{0}(t)=1 \text { for } t \geq 1,
$$

and $\left\|\eta_{0}\right\|_{C^{2}} \leq C$. For $\sigma>0$, define

$$
w_{\sigma}(x):=\eta_{0}\left(\frac{\rho(x)}{\sigma}\right), \quad x \in \mathbb{R}^{n},
$$

where $\rho(x):=\operatorname{dist}(x, \partial \Omega)$ for $x \in \Omega$ and $w_{\sigma}(x):=0$ for $x \notin \Omega$. Then

$$
0 \leq w_{\sigma} \leq 1, \quad w_{\sigma}(x)=0 \text { for } x \notin \Omega, \quad w_{\sigma}(x)=1 \text { whenever } \rho(x) \geq \sigma,
$$

so that $w_{\sigma}$ varies only in the boundary layer

$$
\{x \in \Omega: 0<\rho(x)<\sigma\}.
$$

Moreover,

$$
w_{\sigma} \in \mathcal{H}_{0}^{s+\ln }(\Omega), \quad w_{\sigma} \rightarrow 1 \quad \text { a.e. in } \quad \Omega \quad \text { as } \sigma \downarrow 0,
$$

and

\begin{equation}\label{3.1}
\int_{\Omega} w_{\sigma}^{2}dx
=
|\Omega|+O(|\Omega_{\sigma}|),
\end{equation}

where

\begin{equation}\label{3.2}
\Omega_t
:=
\{x\in\Omega:\operatorname{dist}(x,\partial\Omega)<t\}.
\end{equation}

For $z \in \mathbb{R}^{n}$, define

$$
v_{\sigma}(z, x):=w_{\sigma}(x) e^{-i x \cdot z}.
$$

The following cutoff plane-wave estimate is the key nonlocal ingredient in the proof of the upper bound. It may be viewed as the fractional-logarithmic counterpart of the corresponding estimates for the Laplacian and the fractional Laplacian.


\begin{lemma}\label{Lemma 3.1}
		Assume that there exist $t_0>0$ and $C_\Omega>0$ such that
		$$
		|\Omega_t|\leq C_\Omega t
		\qquad\text{for all }0<t\leq t_0.
		$$
		Then, for all sufficiently large $r$ and all sufficiently small
		$\sigma$, one has
		\begin{equation}\label{3.3}
			\int_{B_r}\int_\Omega
			\overline{v_\sigma(z,y)}
			(-\Delta)_y^{s+\ln}v_\sigma(z,y)
			\,dy\,dz
			=
			\mathcal{M}_{s,\ln}(r)
			\int_\Omega w_\sigma^2(y)\,dy
			+
			\Psi_{s,\ln}(r,\sigma),
		\end{equation}
		where
		\begin{equation}\label{3.4}
			\mathcal{M}_{s,\ln}(r)
			:=
			\int_{B_r}|z|^{2s}\ln|z|^2\,dz
			=
			n\omega_n r^{n+2s}
			\left(
			\frac{\ln r^2}{n+2s}
			-
			\frac{2}{(n+2s)^2}
			\right)>0.
		\end{equation}
		Moreover,
		\begin{equation}\label{3.5}
			|\Psi_{s,\ln}(r,\sigma)|
			\leq
			Cr^n
			\begin{cases}
				1, & 0<s<\frac12,\\
				(1+|\ln\sigma|)^2, & s=\frac12,\\
				\sigma^{1-2s}(1+|\ln\sigma|),
				& \frac12<s<1.
			\end{cases}
		\end{equation}
		In particular, if
		$$
		\sigma=c_0r^{-\beta},
		\qquad
		0<\beta<\min\{1,2s\},
		$$
		then, for all sufficiently large $r$,
		\begin{equation}\label{eq:psi-coupled}
			|\Psi_{s,\ln}(r,\sigma)|
			\leq
			C_1r^{n+2s}\sigma.
		\end{equation}
		Here $C>0$ depends only on $n$, $s$, and $\Omega$, while for fixed
		$c_0$ and $\beta$, the constant $C_1>0$ may also depend on
		$c_0$ and $\beta$.
\end{lemma}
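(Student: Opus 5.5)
We will work on the Fourier side, using the convention of Theorem~\ref{Theorem 2.5}. Write $m(\xi):=|\xi|^{2s}\ln|\xi|^2$, so that $(-\Delta)^{s+\ln}$ is multiplication by $m$. For fixed $z$ we have $\widehat{v_\sigma(z,\cdot)}(\xi)=\widehat{w_\sigma}(\xi+z)$. Plancherel then gives
\[
\int_\Omega\overline{v_\sigma(z,y)}(-\Delta)^{s+\ln}_y v_\sigma(z,y)\,dy=\int_{\mathbb R^n} m(\xi-z)\,|\widehat{w_\sigma}(\xi)|^2\,d\xi ,
\]
after the change of variables $\xi\mapsto\xi-z$. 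We integrate over $z\in B_r$ and use Fubini, which is justified since $w_\sigma\in\mathcal H_0^{s+\ln}(\Omega)$. Since $\int|\widehat{w_\sigma}|^2=\int_\Omega w_\sigma^2$ and $\int_{B_r}m(z)\,dz=\mathcal M_{s,\ln}(r)$, the identity \eqref{3.3} holds with
\[
\Psi_{s,\ln}(r,\sigma)=\int_{\mathbb R^n}|\widehat{w_\sigma}(\xi)|^2\Big(\int_{B_r}\big(m(\xi-z)-m(z)\big)\,dz\Big)d\xi .
\]
The explicit formula in \eqref{3.4} is the polar-coordinate computation already carried out in the proof of Lemma~\ref{Lemma 2.4}. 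Positivity holds once $\ln r^2>2/(n+2s)$, that is, for $r$ large.

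The main step is to bound the inner bracket $D_r(\xi):=\int_{B_r}(m(\xi-z)-m(z))\,dz$, which we split into a low-frequency and a high-frequency regime.

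For $|\xi|\le r/2$ we Taylor-expand $m(z-\xi)$ in $\xi$. The first-order term integrates to zero over $B_r$ by symmetry. The second-order term is controlled by $|\xi|^2\int_{B_r}|D^2m|$, where $|D^2 m(z)|\lesssim |z|^{2s-2}(1+|\ln|z||)$. To handle the non-smoothness of $m$ at the origin, we treat the region $|z|\le 2|\xi|$ directly: there both $|m(\xi-z)|$ and $|m(z)|$ are $O(|\xi|^{2s}(1+\ln_+|\xi|))$ on a set of volume $O(|\xi|^n)$. This should give
\[
|D_r(\xi)|\lesssim |\xi|^2 r^{n+2s-2}(1+\ln r)+|\xi|^{n+2s}(1+\ln_+|\xi|).
\]
The $\ln r$ loss is the logarithmic feature, and we must keep track of it. For $|\xi|>r/2$ we use the crude bound $|D_r(\xi)|\lesssim r^n|\xi|^{2s}(1+\ln|\xi|)$.

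The remaining task is to estimate the weighted integrals $\int|\widehat{w_\sigma}|^2|\xi|^{2}$, $\int|\widehat{w_\sigma}|^2|\xi|^{2s}\ln(2+|\xi|)$ and their truncations. This is where the geometry enters. Because $|\nabla w_\sigma|\lesssim\sigma^{-1}$ on $\Omega_\sigma$ and $|\Omega_\sigma|\le C_\Omega\sigma$, we get $\|\nabla w_\sigma\|_2^2\lesssim\sigma^{-1}$ and $\|w_\sigma\|_2\lesssim1$. Interpolating via $\int|\xi|^{2s}|\widehat{w_\sigma}|^2\lesssim\|w_\sigma\|_2^{2-2s}\|\nabla w_\sigma\|_2^{2s}$ is too crude. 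Instead we will split frequencies at $|\xi|\sim\sigma^{-1}$ and estimate the Gagliardo-type energy directly, as in the fractional case \cite{WCH21}. A function that jumps across a layer of width $\sigma$ and volume $\sim\sigma$ has $H^{s}$-energy of order $1$ for $s<\tfrac12$, order $|\ln\sigma|$ at $s=\tfrac12$, and order $\sigma^{1-2s}$ for $s>\tfrac12$. The extra logarithm in the symbol contributes one more factor $(1+|\ln\sigma|)$. This would produce the three cases in \eqref{3.5} after the resulting powers of $r$ are absorbed. I expect this step to be the main obstacle: we must show that the Taylor term $|\xi|^2 r^{n+2s-2}\ln r$ is genuinely bounded by $Cr^n$ times the stated $\sigma$-factor. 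Truncating at $|\xi|\le r/2$ gives $\int_{|\xi|\le r/2}|\xi|^2|\widehat{w_\sigma}|^2\lesssim r^{2-2s}\int|\xi|^{2s}|\widehat{w_\sigma}|^2$, but this still leaves a stray $\ln r$. Absorbing it will require either a refined symmetric second-difference estimate, or using the coupling between $\sigma$ and $r$ at the end. If needed, I would accept a bound with $\ln r$ and only claim \eqref{eq:psi-coupled}.

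Finally, for \eqref{eq:psi-coupled} we insert $\sigma=c_0r^{-\beta}$ and compare each case with $r^{n+2s}\sigma=c_0r^{n+2s-\beta}$. For $s<\tfrac12$ we need $r^n\lesssim r^{n+2s-\beta}$, which holds since $\beta<2s$. For $s=\tfrac12$ the bound $r^n(\ln r)^2\lesssim r^{n+1-\beta}$ holds since $\beta<1$. For $s>\tfrac12$ we need $r^n r^{-\beta(1-2s)}\ln r\lesssim r^{n+2s-\beta}$, i.e.\ $\ln r\lesssim r^{2s(1-\beta)}$, which holds since $\beta<1$. This margin also absorbs any stray $\ln r$ factor from the main step.
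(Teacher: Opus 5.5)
As written, your proposal has a genuine gap: it does not prove \eqref{3.5}. Your route also differs from the paper's, which works in physical space rather than on the Fourier side. Your reformulation itself is sound: it produces the same $\Psi_{s,\ln}(r,\sigma)$ as the paper. The formula for $\mathcal{M}_{s,\ln}$ and the passage from \eqref{3.5} to \eqref{eq:psi-coupled} are also fine.

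\textbf{The stray $\ln r$.} After you use $|\xi|^2\le(r/2)^{2-2s}|\xi|^{2s}$, your low-frequency part becomes $r^n(1+\ln r)\int|\xi|^{2s}|\widehat{w_\sigma}|^2\,d\xi$. This gives \eqref{3.5} only with an extra factor $\ln r$. Your fallback, settling for \eqref{eq:psi-coupled}, gives up the $r$-uniform bound $Cr^n\times(\sigma\text{-factor})$ that the lemma asserts.

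The missing idea is a bound on the bracket that is uniform in $z$. Since $w_\sigma$ is real, $|\widehat{w_\sigma}|^2$ is even, so you may replace $m(\xi-z)-m(z)$ by
\[
\tfrac12\bigl(m(z+\xi)+m(z-\xi)\bigr)-m(z)=\int_{\mathbb R^n}\cos(z\cdot h)\bigl(1-\cos(\xi\cdot h)\bigr)K_{s+\ln}(|h|)\,dh .
\]
For every $z$, the modulus of this is at most $\int_{\mathbb R^n}(1-\cos(\xi\cdot h))|K_{s+\ln}(|h|)|\,dh\le C|\xi|^{2s}(1+|\ln|\xi||)$.

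This is exactly the paper's argument, carried out in physical space. The splitting
\[
|v_\sigma(z,x)-v_\sigma(z,y)|^2=\bigl(w_\sigma^2(x)+w_\sigma^2(y)\bigr)\bigl(1-\cos(z\cdot(x-y))\bigr)+\bigl(w_\sigma(x)-w_\sigma(y)\bigr)^2\cos(z\cdot(x-y))
\]
isolates $m(z)\int_\Omega w_\sigma^2$. Then $|\cos|\le 1$ bounds the remainder independently of $z$. This gives
\[
|\Psi_{s,\ln}|\le Cr^n\iint(w_\sigma(x)-w_\sigma(y))^2\frac{1+|\ln|x-y||}{|x-y|^{n+2s}}\,dx\,dy,
\]
with no Taylor expansion and no $\ln r$.

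You can also repair the loss inside your own scheme. Do not discard the factor $t^{2-2s}$, where $t:=|\xi|/r\le\frac12$. Since $\ln r\le\ln_+|\xi|+\ln(1/t)$ and $\sup_{0<t\le1/2}t^{2-2s}\ln(1/t)<\infty$, you get $|\xi|^2r^{n+2s-2}(1+\ln r)\le Cr^n|\xi|^{2s}(1+\ln_+|\xi|)$. This is the same weight as in your high-frequency part.

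\textbf{The $\sigma$-dependence.} Your treatment of the $\sigma$-dependence is only heuristic, and for $0<s<\frac12$ the heuristic points the wrong way. In that range the extra logarithm does not cost a factor $(1+|\ln\sigma|)$, because $\int_0^{\rho_0}\rho^{-2s}(1+|\ln\rho|)\,d\rho<\infty$. That finiteness is exactly why \eqref{3.5} reads $Cr^n$ there.

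What is needed is a concrete computation through $A_\sigma(h):=\int|w_\sigma(x+h)-w_\sigma(x)|^2\,dx$, with $\rho_0:=\min\{t_0/2,1/2\}$ and $0<\sigma\le\rho_0$.
\begin{itemize}
\item The zero extension satisfies $|w_\sigma(x+h)-w_\sigma(x)|\le C\min\{1,|h|/\sigma\}$.
\item The boundary-layer hypothesis gives $|\{x:w_\sigma(x+h)\neq w_\sigma(x)\}|\le C(\sigma+|h|)$ for $|h|\le\rho_0$.
\item Together these give $A_\sigma(h)\le C|h|^2/\sigma$ for $|h|\le\sigma$, $A_\sigma(h)\le C|h|$ for $\sigma<|h|\le\rho_0$, and $A_\sigma(h)\le C$ for $|h|>\rho_0$.
\item Integrating against $(1+|\ln|h||)|h|^{-n-2s}$ produces the three cases of \eqref{3.5}.
\end{itemize}
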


\begin{proof}
We first rewrite the quadratic form associated with
$(-\Delta)^{s+\ln}$ for $v_\sigma$. We then identify the
principal term and estimate the corresponding remainder after
integration over $B_r$.
	
	 \medskip
\noindent\textbf{Step 1: Quadratic-form representation.}
For fixed $z\in B_r$, we set
$$
I_\sigma(z)
:=
\int_{\Omega}
\overline{v_\sigma(z,x)}
(-\Delta)^{s+\ln}v_\sigma(z,x)\,dx,
$$
where the above expression is understood in the quadratic-form
sense. By the quadratic-form representation associated with
$(-\Delta)^{s+\ln}$, we have
$$
I_\sigma(z)
=
\frac{1}{2}
\iint_{\mathbb{R}^{2n}}
|v_\sigma(z,x)-v_\sigma(z,y)|^2
K_{s+\ln}(|x-y|)\,dx\,dy,
$$
where
$$
K_{s+\ln}(r)
=
c_{n,s}
\bigl(b_{n,s}-2\ln r\bigr)r^{-n-2s},
\qquad
b_{n,s}
=
\frac{c_{n,s}'}{c_{n,s}}.
$$
	
\medskip
\noindent\textbf{Step 2: Separation of the principal term.}
Using $v_\sigma(z,x)=w_\sigma(x)e^{-ix\cdot z}$, we have
$$
\begin{aligned}
	|v_\sigma(z,x)-v_\sigma(z,y)|^2
	&=
	w_\sigma^2(x)+w_\sigma^2(y)
	-2w_\sigma(x)w_\sigma(y)
	\cos\bigl(z\cdot(x-y)\bigr) \\
	&=
	\bigl(w_\sigma^2(x)+w_\sigma^2(y)\bigr)
	\bigl(1-\cos\bigl(z\cdot(x-y)\bigr)\bigr) \\
	&\quad
	+\bigl(w_\sigma(x)-w_\sigma(y)\bigr)^2
	\cos\bigl(z\cdot(x-y)\bigr).
\end{aligned}
$$
Substituting this identity into the expression obtained in Step 1
and using the symmetry of the kernel, we obtain
$$
\begin{aligned}
	I_\sigma(z)
	&=
	\int_{\mathbb{R}^n} w_\sigma^2(x)
	\int_{\mathbb{R}^n}
	\bigl(1-\cos\bigl(z\cdot(x-y)\bigr)\bigr)
	K_{s+\ln}(|x-y|)\,dy\,dx \\
	&\quad
	+\frac{1}{2}
	\iint_{\mathbb{R}^{2n}}
	\bigl(w_\sigma(x)-w_\sigma(y)\bigr)^2
	\cos\bigl(z\cdot(x-y)\bigr)
	K_{s+\ln}(|x-y|)\,dx\,dy.
\end{aligned}
$$
To identify the first term, set $h=x-y$. Since
$K_{s+\ln}$ is radial, it is even, and the contribution involving
$\sin(z\cdot h)$ vanishes. By the kernel representation and the
Fourier multiplier characterization of $(-\Delta)^{s+\ln}$ in
\cite[Theorem 1.1(ii)--(iii)]{CCH26}, we obtain
$$
\int_{\mathbb{R}^n}
\bigl(1-\cos(z\cdot h)\bigr)
K_{s+\ln}(|h|)\,dh
=
|z|^{2s}\ln|z|^2.
$$
Therefore,
$$
I_\sigma(z)
=
|z|^{2s}\ln|z|^2
\int_\Omega w_\sigma^2(x)\,dx
+
R_{s,\ln}(z,\sigma),
$$
where
$$
R_{s,\ln}(z,\sigma)
:=
\frac{1}{2}
\iint_{\mathbb{R}^{2n}}
\bigl(w_\sigma(x)-w_\sigma(y)\bigr)^2
\cos\bigl(z\cdot(x-y)\bigr)
K_{s+\ln}(|x-y|)\,dx\,dy.
$$
	
\medskip
\noindent\textbf{Step 3: Integration with respect to $z$.}
Integrating the decomposition obtained in Step 2 over $B_r$, we get
$$
\int_{B_r} I_\sigma(z)\,dz
=
\left(
\int_{B_r}|z|^{2s}\ln|z|^2\,dz
\right)
\int_\Omega w_\sigma^2(x)\,dx
+
\int_{B_r}R_{s,\ln}(z,\sigma)\,dz.
$$
Using polar coordinates, we have
$$
\begin{aligned}
	\int_{B_r}|z|^{2s}\ln|z|^2\,dz
	&=
	n\omega_n\int_0^r
	\rho^{n+2s-1}\ln\rho^2\,d\rho \\
	&=
	n\omega_n r^{n+2s}
	\left(
	\frac{\ln r^2}{n+2s}
	-
	\frac{2}{(n+2s)^2}
	\right)
	=
	\mathcal{M}_{s,\ln}(r).
\end{aligned}
$$
In particular, $\mathcal{M}_{s,\ln}(r)>0$ for all sufficiently large
$r$. Renaming the integration variable $x$ as $y$ and defining
$$
\Psi_{s,\ln}(r,\sigma)
:=
\int_{B_r}R_{s,\ln}(z,\sigma)\,dz,
$$
we obtain
$$
\int_{B_r} I_\sigma(z)\,dz
=
\mathcal{M}_{s,\ln}(r)
\int_\Omega w_\sigma^2(y)\,dy
+
\Psi_{s,\ln}(r,\sigma).
$$
	
\medskip
\noindent\textbf{Step 4: Estimate of the remainder.}
By the definitions of $R_{s,\ln}(z,\sigma)$ and
$\Psi_{s,\ln}(r,\sigma)$, and since $|\cos\theta|\leq1$, we have
$$
|\Psi_{s,\ln}(r,\sigma)|
\leq
\frac12\int_{B_r}
\iint_{\mathbb{R}^{2n}}
\bigl(w_\sigma(x)-w_\sigma(y)\bigr)^2
\left|K_{s+\ln}(|x-y|)\right|
\,dx\,dy\,dz.
$$
Since
$$
|K_{s+\ln}(\rho)|
\leq
C\frac{1+|\ln\rho|}{\rho^{n+2s}},
\qquad \rho>0,
$$
and $|B_r|=\omega_n r^n$, it follows that
$$
|\Psi_{s,\ln}(r,\sigma)|
\leq
Cr^n E_\sigma,
$$
where
$$
E_\sigma
:=
\iint_{\mathbb{R}^{2n}}
\bigl(w_\sigma(x)-w_\sigma(y)\bigr)^2
\frac{1+|\ln|x-y||}{|x-y|^{n+2s}}
\,dx\,dy.
$$

For $h\in\mathbb{R}^n$, set
$$
A_\sigma(h)
:=
\int_{\mathbb{R}^n}
|w_\sigma(x+h)-w_\sigma(x)|^2\,dx.
$$
Since $\rho(x)=\operatorname{dist}(x,\partial\Omega)$ is
$1$-Lipschitz, $\eta_0(0)=0$, and $\|\eta_0'\|_\infty\leq C$, the
zero extension of $w_\sigma$ satisfies
$$
|w_\sigma(x+h)-w_\sigma(x)|
\leq
C\min\left\{1,\frac{|h|}{\sigma}\right\}
$$
for all $x,h\in\mathbb{R}^n$.

Let
$$
D_h
:=
\left\{
x\in\mathbb{R}^n:
w_\sigma(x+h)\neq w_\sigma(x)
\right\}.
$$
If $x\in D_h\cap\Omega$, then necessarily
$\operatorname{dist}(x,\partial\Omega)\leq\sigma+|h|$; otherwise
$w_\sigma(x)=w_\sigma(x+h)=1$. If $x\in D_h\setminus\Omega$,
then $x+h\in\Omega$ and
$\operatorname{dist}(x+h,\partial\Omega)\leq|h|$. Hence
$$
D_h
\subset
\Omega_{\sigma+|h|}
\cup
\bigl(\Omega_{|h|}-h\bigr).
$$
Set
$$
\rho_0:=\min\left\{\frac{t_0}{2},\frac12\right\}
$$
and assume that $0<\sigma\leq\rho_0$. For $|h|\leq\rho_0$, the
boundary-layer estimate yields
$$
|D_h|
\leq
C(\sigma+|h|).
$$
Combining the previous two estimates, and using
$\|w_\sigma\|_{L^2(\mathbb{R}^n)}^2\leq|\Omega|$ when
$|h|>\rho_0$, we obtain
$$
A_\sigma(h)
\leq
C
\begin{cases}
	|h|^2/\sigma, & |h|\leq\sigma,\\
	|h|, & \sigma<|h|\leq\rho_0,\\
	1, & |h|>\rho_0.
\end{cases}
$$

Using the change of variables $h=y-x$ and polar coordinates, we
therefore obtain
$$
E_\sigma
\leq
C\left[
\frac1{\sigma}
\int_0^\sigma
\rho^{1-2s}(1+|\ln\rho|)\,d\rho
+
\int_\sigma^{\rho_0}
\rho^{-2s}(1+|\ln\rho|)\,d\rho
+1
\right].
$$
Elementary integration then gives
$$
E_\sigma
\leq
C
\begin{cases}
	1, & 0<s<\frac12,\\
	(1+|\ln\sigma|)^2, & s=\frac12,\\
	\sigma^{1-2s}(1+|\ln\sigma|),
	& \frac12<s<1.
\end{cases}
$$
Consequently,
$$
|\Psi_{s,\ln}(r,\sigma)|
\leq
Cr^n
\begin{cases}
	1, & 0<s<\frac12,\\
	(1+|\ln\sigma|)^2, & s=\frac12,\\
	\sigma^{1-2s}(1+|\ln\sigma|),
	& \frac12<s<1.
\end{cases}
$$

Finally, let
$$
\sigma=c_0r^{-\beta},
\qquad
0<\beta<\min\{1,2s\}.
$$
For $0<s<\frac12$, the condition $\beta<2s$ gives
$r^n=O(r^{n+2s}\sigma)$. For $s=\frac12$, the condition
$\beta<1$ and the fact that logarithmic powers are dominated by
positive powers of $r$ give
$$
r^n(1+|\ln\sigma|)^2
=
O(r^{n+1}\sigma).
$$
If $\frac12<s<1$, since $\beta<1$,
$$
r^n\sigma^{1-2s}(1+|\ln\sigma|)
=
O(r^{n+2s}\sigma).
$$
Hence, in all cases,
$$
|\Psi_{s,\ln}(r,\sigma)|
\leq
C_1r^{n+2s}\sigma
$$
for all sufficiently large $r$. This completes the proof.
	
\end{proof}

\begin{theorem}\label{Theorem 3.2}
   Let $\Omega \subset \mathbb{R}^{n}$ be a bounded domain, and let

$$
\lambda_{1}^{s+\ln }(\Omega) \leq \lambda_{2}^{s+\ln }(\Omega) \leq \cdots \uparrow+\infty
$$

be the Dirichlet eigenvalues of $(-\Delta)^{s+\ln }$ on $\Omega$, where $0<s<1$. Assume that there exist $t_{0}>0$ and $C_{\Omega}>0$ such that

$$
\left|\Omega_{t}\right| \leq C_{\Omega} t \quad \text { for all } 0<t \leq t_{0},
$$

where $\Omega_{t}$ is defined in \eqref{3.2}. Then there exists a constant $C=C(n, s, \Omega)>0$ such that, for every integer $k> \frac{\omega_{n}|\Omega|}{(2 \pi)^{n}} \exp \left(\frac{n}{n+2 s}\right)$, we have

$$
\sum_{j=1}^{k} \lambda_{j}^{s+\ln }(\Omega) \leq \frac{2}{n+2 s}(2 \pi)^{2 s}\left(\omega_{n}|\Omega|\right)^{-2 s / n} k^{1+2 s / n} \ln k+C k^{1+2 s / n}.
$$
\end{theorem}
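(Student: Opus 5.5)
We follow Kröger's method, using the cutoff plane waves $v_\sigma(z,\cdot)=w_\sigma e^{-ix\cdot z}$ as trial functions and Lemma~\ref{Lemma 3.1} to control their energy.

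\textbf{Variational inequality.} Let $\{\phi_j\}$ be orthonormal eigenfunctions and $P_k$ the $L^2(\Omega)$-orthogonal projection onto $\mathrm{span}\{\phi_1,\dots,\phi_k\}$. Write $\mathcal E(u)$ for the quadratic form of $(-\Delta)^{s+\ln}$. The standard Kröger averaging argument (min--max applied to the functions $v_\sigma(z,\cdot)-P_kv_\sigma(z,\cdot)$, integrated over $z\in B_r$) gives
$$
\sum_{j=1}^k\lambda_j^{s+\ln}(\Omega)\Big(\int_{B_r}\|v_\sigma(z,\cdot)\|_{L^2}^2dz-\sum_{j=1}^k\int_{B_r}|\langle v_\sigma(z,\cdot),\phi_j\rangle|^2dz\Big)
\le k\int_{B_r}\mathcal E(v_\sigma(z,\cdot))\,dz-\ldots,
$$
or, in the cleaner form of Kröger's inequality,
$$
\sum_{j=1}^k\lambda_j\le \frac{\int_{B_r}\mathcal E(v_\sigma(z,\cdot))dz}{|B_r|\int_\Omega w_\sigma^2}\cdot\ldots
$$
Concretely, I would use the version
$$
\sum_{j=1}^k\lambda_j \le \frac{(2\pi)^n}{\int_\Omega w_\sigma^2}\int_{B_r}\mathcal E(v_\sigma(z,\cdot))dz + \lambda_k\Big(k-\frac{|B_r|\int_\Omega w_\sigma^2}{(2\pi)^n}\Big)\quad(\ast)
$$
valid for every $r>0$. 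It follows from Parseval: $\int_{\mathbb R^n}|\langle v_\sigma(z,\cdot),\phi_j\rangle|^2dz=(2\pi)^n\|w_\sigma\phi_j\|^2\le(2\pi)^n$, together with the bound of the quadratic form on the orthogonal complement of the first $k$ eigenfunctions by $\lambda_{k+1}\ge\lambda_k$, and the spectral identity $\mathcal E(v)=\sum_j\lambda_j|\langle v,\phi_j\rangle|^2$. Note that the eigenvalues may be negative. By Corollary~\ref{Corollary 2.7}(i), the assumption on $k$ ensures $\lambda_k>0$, so the rearrangement producing $(\ast)$ is legitimate.

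\textbf{Choice of $r$ and $\sigma$.} Choose $r$ so that $|B_r|\int_\Omega w_\sigma^2=(2\pi)^nk$; the last term in $(\ast)$ then vanishes and $(\ast)$ becomes
$$
\sum_{j=1}^k\lambda_j\le(2\pi)^n\mathcal M_{s,\ln}(r)+\frac{(2\pi)^n\Psi_{s,\ln}(r,\sigma)}{\int_\Omega w_\sigma^2}.
$$
Take $\sigma=c_0r^{-\beta}$ with $0<\beta<\min\{1,2s\}$. By \eqref{3.1} and the hypothesis $|\Omega_t|\le C_\Omega t$, we have $\int_\Omega w_\sigma^2=|\Omega|(1+O(\sigma))$. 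Hence $r^n=\frac{(2\pi)^nk}{\omega_n|\Omega|}(1+O(\sigma))$, so $r\sim c\,k^{1/n}$.

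\textbf{Expansion.} Insert \eqref{3.4}:
$$
(2\pi)^n\mathcal M_{s,\ln}(r)=(2\pi)^n n\omega_n r^{n+2s}\Big(\frac{2\ln r}{n+2s}-\frac2{(n+2s)^2}\Big).
$$
With $r^{n+2s}=\big(\frac{(2\pi)^nk}{\omega_n|\Omega|}\big)^{1+2s/n}(1+O(\sigma))$ and $\ln r=\frac1n\ln k+O(1)$, the leading term is $\frac{2}{n+2s}(2\pi)^{2s}(\omega_n|\Omega|)^{-2s/n}k^{1+2s/n}\ln k$. The remaining contributions are $O(k^{1+2s/n})$ plus $O(\sigma k^{1+2s/n}\ln k)$. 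By \eqref{eq:psi-coupled}, the $\Psi$ term is $O(r^{n+2s}\sigma)=O(k^{1+2s/n}\sigma)$.

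\textbf{Main obstacle.} The error $\sigma k^{1+2s/n}\ln k\sim k^{1+2s/n-\beta/n}\ln k$ is $o(k^{1+2s/n}\ln k)$ but is not $O(k^{1+2s/n})$ unless the $O(\sigma)$ loss in $\int w_\sigma^2$ is handled more carefully. I would try to avoid it by not inserting the loss into $r$ in a way that multiplies $\ln k$. Instead, use monotonicity: $\int_\Omega w_\sigma^2\le|\Omega|$ gives $r^n\ge\frac{(2\pi)^nk}{\omega_n|\Omega|}$. The difficulty, however, is an upper bound on $r$, and $r^{n}\le\frac{(2\pi)^nk}{\omega_n|\Omega|}(1-C\sigma)^{-1}$ reintroduces the factor. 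Failing a sharper idea, I would choose $\beta$ close to $\min\{1,2s\}$ and argue with $\sigma\ln k$ absorbed: take $\sigma=c_0 r^{-\beta}$ and note that the precise claim may require $\sigma\ln r=O(1)$. This holds if $\sigma$ is instead chosen as $c_0/\ln r$ only for $s<1/2$. Checking whether \eqref{3.5} then still yields $\Psi=O(r^{n+2s})$ in all regimes of $s$ is the step I expect to be delicate. For large $k$ the constants are absorbed into $C$, and for the finitely many small $k$ above the threshold the bound holds after enlarging $C$.
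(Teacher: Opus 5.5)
As written, the proposal does not close. Your framework is right: averaging $v_\sigma(z,\cdot)-P_kv_\sigma(z,\cdot)$ over $B_r$, applying Lemma~\ref{Lemma 3.1}, and taking $\sigma=c_0r^{-\beta}$. But there is a normalization error in $(\ast)$, and the argument stops at an obstacle that does not exist.

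\textbf{The prefactor in $(\ast)$.} Min--max gives $\mathcal E(v)\ge\lambda_{k+1}\|v\|^2-\sum_{j\le k}(\lambda_{k+1}-\lambda_j)|\langle v,\phi_j\rangle|^2$. Integrating over $B_r$ and using $\int_{B_r}|\langle v_\sigma(z,\cdot),\phi_j\rangle|^2dz\le(2\pi)^n$, one gets
\[
\sum_{j=1}^k\lambda_j^{s+\ln}(\Omega)\le(2\pi)^{-n}\int_{B_r}\mathcal E(v_\sigma(z,\cdot))\,dz+\lambda_{k+1}^{s+\ln}(\Omega)\Big(k-(2\pi)^{-n}|B_r|\int_\Omega w_\sigma^2\Big).
\]
The prefactor is $(2\pi)^{-n}$, with no division by $\int_\Omega w_\sigma^2$.
\begin{itemize}
\item Your version yields the main term $(2\pi)^n\mathcal M_{s,\ln}(r)$. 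This is off by the factor $(2\pi)^{2n}/\int_\Omega w_\sigma^2$, so the constant you state does not follow from your own display.
\item With the correct prefactor and your choice $|B_r|\int_\Omega w_\sigma^2=(2\pi)^nk$, the main term is $k\,\mathcal M_{s,\ln}(r)/|B_r|=nkr^{2s}\big(\frac{\ln r^2}{n+2s}-\frac{2}{(n+2s)^2}\big)$. This does give the constant $\frac{2}{n+2s}(2\pi)^{2s}(\omega_n|\Omega|)^{-2s/n}$.
\item Replacing $\lambda_{k+1}$ by $\lambda_k$ is only legitimate when the bracket is $\le 0$, so $(\ast)$ is not valid ``for every $r>0$''. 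At your $r$ the bracket vanishes, so neither this nor the sign of $\lambda_k$ matters.
\end{itemize}

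\textbf{The ``main obstacle'' is a miscalculation.} With $\sigma=c_0r^{-\beta}$ and $r\asymp k^{1/n}$, the extra term satisfies
\[
\sigma k^{1+2s/n}\ln k\asymp k^{1+2s/n}\cdot k^{-\beta/n}\ln k .
\]
Since $k^{-\beta/n}\ln k\to0$ for every $\beta>0$, this term is $o(k^{1+2s/n})$. The condition $\sigma\ln r=O(1)$ you were worried about therefore holds automatically, and no change of $\sigma$ is needed. (The alternative $\sigma=c_0/\ln r$ would in fact also work for every $s$ by \eqref{3.5}, not only for $s<\frac12$.)

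This is exactly how the paper finishes:
\begin{itemize}
\item It takes $r_k=2\pi(\omega_n|\Omega|)^{-1/n}(k+k^\alpha)^{1/n}$ with $\beta=n(1-\alpha)$ and $c_0$ small, so that $0<\omega_nr_k^n\int_\Omega w_\sigma^2-(2\pi)^nk\le(2\pi)^nk^\alpha$.
\item It discards the $\lambda_{k+1}$ term using $\lambda_{k+1}>0$; this is where the threshold on $k$ enters.
\item It absorbs the resulting $O(k^{\alpha+2s/n}\ln k)$ into $Ck^{1+2s/n}$ because $\alpha<1$. That is the same quantity as your $O(\sigma k^{1+2s/n}\ln k)$, since there $\sigma\asymp k^{\alpha-1}$.
\end{itemize}

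Once both points are fixed, your exact-balancing choice of $r$ is a valid and slightly cleaner variant. It needs neither the smallness of $c_0$ nor the positivity of $\lambda_{k+1}$. You must, however, justify that such an $r$ exists. One way: $r\mapsto\omega_nr^n\int_\Omega w_{c_0r^{-\beta}}^2$ is continuous and increasing from $0$ to $\infty$, because $\eta_0$ is increasing. Alternatively, fix $\sigma=c_0k^{-\beta/n}$ first, choose $r$ afterwards, and estimate $\Psi_{s,\ln}$ directly from \eqref{3.5}.
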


\begin{proof}
   Let $\left\{\phi_{j}\right\}_{j \geq 1}$ be a real-valued orthonormal basis of eigenfunctions in $L^{2}(\Omega)$ associated with $\left\{\lambda_{j}^{s+\ln }(\Omega)\right\}_{j \geq 1}$. We use the normalized Fourier transform

$$
\widehat{u}(\xi):=(2 \pi)^{-n / 2} \int_{\mathbb{R}^{n}} u(x) e^{-i x \cdot \xi} d x, \quad \xi \in \mathbb{R}^{n}
$$

Under this convention, the Fourier symbol of $(-\Delta)^{s+\ln }$ is

$$
m(\xi):=|\xi|^{2 s} \ln |\xi|^{2}.
$$

\medskip
\noindent
{\bf Step 1: Projection onto the orthogonal complement of the first $k$ eigenfunctions.} Set

$$
\Phi_{k}(x, y):=\sum_{j=1}^{k} \phi_{j}(x) \phi_{j}(y), \quad x, y \in \Omega,
$$

and denote by $\mathcal{F}_{x}\left(\Phi_{k}\right)(z, y)$ the Fourier transform of $\Phi_{k}$ with respect to the $x$-variable. Define

$$
v_{\sigma, k}(z, y):=v_{\sigma}(z, y)-(2 \pi)^{n / 2} \mathcal{F}_{x}\left(w_{\sigma} \Phi_{k}\right)(z, y).
$$

Then, for every fixed $z\in\mathbb{R}^{n}$, the function
$y\mapsto v_{\sigma,k}(z,y)$ belongs to
$\mathcal{H}_{0}^{s+\ln}(\Omega)$ and is orthogonal in
$L^{2}(\Omega)$ to $\phi_{1},\ldots,\phi_{k}$.
Since $k$ is in the range stated in the theorem,
$\lambda_{k+1}^{s+\ln}(\Omega)>0$. By the Rayleigh-Ritz principle,
$$
\int_{\Omega}
\overline{v_{\sigma,k}(z,y)}
(-\Delta)_{y}^{s+\ln}v_{\sigma,k}(z,y)\,dy
\geq
\lambda_{k+1}^{s+\ln}(\Omega)
\int_{\Omega}|v_{\sigma,k}(z,y)|^{2}\,dy.
$$
Integrating over
$B_r:=\{z\in\mathbb{R}^{n}:|z|<r\}$, we obtain
\begin{equation}\label{eq:rayleigh-upper}
	\lambda_{k+1}^{s+\ln}(\Omega)
	\int_{B_r}\int_{\Omega}|v_{\sigma,k}(z,y)|^{2}\,dy\,dz
	\leq
	\int_{B_r}\int_{\Omega}
	\overline{v_{\sigma,k}(z,y)}
	(-\Delta)_{y}^{s+\ln}v_{\sigma,k}(z,y)\,dy\,dz.
\end{equation}

\medskip
\noindent
{\bf Step 2: Estimate of the denominator.} Using the orthogonality of the spectral projection, we have

$$
\int_{B_{r}} \int_{\Omega}\left|v_{\sigma, k}(z, y)\right|^{2} d y d z=\int_{B_{r}} \int_{\Omega}\left|v_{\sigma}(z, y)\right|^{2} d y d z-(2 \pi)^{n} \sum_{j=1}^{k} \int_{B_{r}}\left|\widehat{\left(w_{\sigma} \phi_{j}\right)}(z)\right|^{2} d z.
$$

Since $\left|v_{\sigma}(z, y)\right|^{2}=w_{\sigma}(y)^{2}$, it follows that

$$
\int_{B_{r}} \int_{\Omega}\left|v_{\sigma}(z, y)\right|^{2} d y d z=\left|B_{r}\right| \int_{\Omega} w_{\sigma}^{2}(y) d y=\omega_{n} r^{n} \int_{\Omega} w_{\sigma}^{2}(y) d y.
$$

Hence

\begin{equation}\label{eq:denominator-upper}
	\begin{aligned}
		\int_{B_r}\int_\Omega
		|v_{\sigma,k}(z,y)|^2\,dy\,dz
		=
		\omega_n r^n\int_\Omega w_\sigma^2(y)\,dy 
		-(2\pi)^n
		\sum_{j=1}^k
		\int_{B_r}
		\left|
		\widehat{(w_\sigma\phi_j)}(z)
		\right|^2\,dz.
	\end{aligned}
\end{equation}

\medskip
\noindent
{\bf Step 3: Estimate of the numerator.} For each fixed $z \in B_{r}$, write

$$
P_{k} v_{\sigma}(z, \cdot)=\sum_{j=1}^{k} a_{j}(z) \phi_{j}, \quad a_{j}(z):=\left\langle v_{\sigma}(z, \cdot), \phi_{j}\right\rangle_{L^{2}(\Omega)}.
$$

Then

$$
v_{\sigma, k}(z, \cdot)=v_{\sigma}(z, \cdot)-P_{k} v_{\sigma}(z, \cdot),
$$

and, since $(-\Delta)^{s+\ln } \phi_{j}=\lambda_{j}^{s+\ln }(\Omega) \phi_{j}$, we have

$$
(-\Delta)^{s+\ln } P_{k} v_{\sigma}(z, \cdot)=\sum_{j=1}^{k} a_{j}(z) \lambda_{j}^{s+\ln }(\Omega) \phi_{j}.
$$

Using the self-adjointness of $(-\Delta)^{s+\ln}$, together with
the spectral expansion of $P_kv_\sigma(z,\cdot)$ in the
orthonormal eigenfunction basis $\{\phi_j\}_{j=1}^{k}$, we compute
$$
\begin{aligned}
	&\int_{\Omega}
	\overline{v_{\sigma,k}(z,y)}
	(-\Delta)_{y}^{s+\ln}v_{\sigma,k}(z,y)\,dy\\
	&=
	\int_{\Omega}
	\overline{v_{\sigma}(z,y)}
	(-\Delta)_{y}^{s+\ln}v_{\sigma}(z,y)\,dy
	-
	\sum_{j=1}^{k}
	\lambda_{j}^{s+\ln}(\Omega)|a_j(z)|^{2}.
\end{aligned}
$$

Finally, since
$$
a_j(z)
=
\langle v_\sigma(z,\cdot),\phi_j\rangle_{L^2(\Omega)}
=
\int_\Omega
w_\sigma(x)e^{-ix\cdot z}\phi_j(x)\,dx
=
(2\pi)^{n/2}
\widehat{(w_\sigma\phi_j)}(z),
$$
it follows that
$$
|a_j(z)|^{2}
=
(2\pi)^n
\left|\widehat{(w_\sigma\phi_j)}(z)\right|^{2}.
$$
Hence
$$
\begin{aligned}
	&\int_{\Omega}
	\overline{v_{\sigma,k}(z,y)}
	(-\Delta)_{y}^{s+\ln}v_{\sigma,k}(z,y)\,dy\\
	&=
	\int_{\Omega}
	\overline{v_{\sigma}(z,y)}
	(-\Delta)_{y}^{s+\ln}v_{\sigma}(z,y)\,dy\\
	&\quad
	-(2\pi)^n
	\sum_{j=1}^{k}
	\lambda_j^{s+\ln}(\Omega)
	\left|\widehat{(w_\sigma\phi_j)}(z)\right|^{2}.
\end{aligned}
$$
Integrating over $z\in B_r$ yields
\begin{align}\label{eq:numerator-upper}
	&\int_{B_r}\int_{\Omega}
	\overline{v_{\sigma,k}(z,y)}
	(-\Delta)_{y}^{s+\ln}v_{\sigma,k}(z,y)\,dy\,dz
	\nonumber\\
	&=
	\int_{B_r}\int_{\Omega}
	\overline{v_{\sigma}(z,y)}
	(-\Delta)_{y}^{s+\ln}v_{\sigma}(z,y)\,dy\,dz
	\nonumber\\
	&\quad
	-(2\pi)^n
	\sum_{j=1}^{k}
	\lambda_j^{s+\ln}(\Omega)
	\int_{B_r}
	\left|\widehat{(w_\sigma\phi_j)}(z)\right|^{2}\,dz.
\end{align}

Thus the key point is to estimate the first term on the right-hand side.

\medskip
\noindent
{\bf Step 4: Main symbol term and cutoff error.}
By Lemma \ref{Lemma 3.1}, we have
$$
\int_{B_r}\int_\Omega
\overline{v_\sigma(z,y)}
(-\Delta)_y^{s+\ln}v_\sigma(z,y)
\,dy\,dz
=
\mathcal{M}_{s,\ln}(r)
\int_\Omega w_\sigma^2(y)\,dy
+
\Psi_{s,\ln}(r,\sigma).
$$
Combining this identity with \eqref{eq:numerator-upper}, we obtain
\begin{align}\label{eq:numerator-final-upper}
	&\int_{B_r}\int_{\Omega}
	\overline{v_{\sigma,k}(z,y)}
	(-\Delta)_{y}^{s+\ln}v_{\sigma,k}(z,y)\,dy\,dz
	\nonumber\\
	&=
	\mathcal{M}_{s,\ln}(r)
	\int_{\Omega}w_\sigma^2(y)\,dy
	+
	\Psi_{s,\ln}(r,\sigma)
	\nonumber\\
	&\quad
	-(2\pi)^n
	\sum_{j=1}^{k}
	\lambda_j^{s+\ln}(\Omega)
	\int_{B_r}
	\left|\widehat{(w_\sigma\phi_j)}(z)\right|^{2}\,dz.
\end{align}

\medskip
\noindent
{\bf Step 5: Insert the estimates into Rayleigh-Ritz.}
Let us fix
$$
\beta:=\frac12\min\{1,2s\},
\qquad
\alpha:=1-\frac{\beta}{n}.
$$
Then
$$
0<\beta<\min\{1,2s\},
\qquad
0<\alpha<1,
\qquad
\beta=n(1-\alpha).
$$
Moreover, let us define
$$
A:=(2\pi)(\omega_n|\Omega|)^{-1/n},
\qquad
r_k:=A(k+k^\alpha)^{1/n},
$$
and set
$$
\sigma:=c_0r_k^{-\beta},
$$

where $c_{0}>0$ will be chosen below. 
Since $0 \leq w_{\sigma} \leq 1$ and $w_{\sigma} \equiv 1$ on $\{\rho \geq \sigma\}$, we have

$$
0 \leq|\Omega|-\int_{\Omega} w_{\sigma}^{2} d y \leq\left|\Omega_{\sigma}\right|, \quad \Omega_{\sigma}:=\{x \in \Omega: \operatorname{dist}(x, \partial \Omega)<\sigma\}.
$$

By assumption, there exists $t_{0}>0$ such that

$$
\left|\Omega_{t}\right| \leq C_{\Omega} t \quad \text { for all } 0<t \leq t_{0}.
$$

If

$$
k \geq K_{0}:=\left(\frac{c_{0} A^{-\beta}}{t_{0}}\right)^{n / \beta},
$$

then $\sigma \leq t_{0}$, and hence

$$
|\Omega|-\int_{\Omega} w_{\sigma}^{2} d y \leq C_{\Omega} \sigma=C_{\Omega} c_{0} r_{k}^{-\beta}.
$$

Therefore,

$$
\begin{aligned}
\omega_{n} r_{k}^{n} \int_{\Omega} w_{\sigma}^{2} d y-(2 \pi)^{n} k & =(2 \pi)^{n} k^{\alpha}-\omega_{n} r_{k}^{n}\left(|\Omega|-\int_{\Omega} w_{\sigma}^{2} d y\right) \\
& \geq(2 \pi)^{n} k^{\alpha}-\omega_{n} C_{\Omega} c_{0} r_{k}^{n-\beta}.
\end{aligned}
$$

Since $\beta=n(1-\alpha)$, we have
$$
1-\frac{\beta}{n}=\alpha.
$$
Moreover, $\beta<1\leq n$, so $n-\beta>0$. Hence, using
$k+k^\alpha\leq2k$ for $k\geq1$, we obtain
$$
\begin{aligned}
	r_k^{n-\beta}
	&=
	A^{n-\beta}(k+k^\alpha)^{1-\beta/n}\\
	&=
	A^{n-\beta}(k+k^\alpha)^\alpha\\
	&\leq
	A^{n-\beta}2^\alpha k^\alpha.
\end{aligned}
$$
Therefore,
$$
\omega_n r_k^n\int_\Omega w_\sigma^2\,dy-(2\pi)^nk
\geq
\left(
(2\pi)^n
-\omega_nC_\Omega c_0A^{n-\beta}2^\alpha
\right)k^\alpha.
$$

Consequently, if

$$
0<c_0<
\frac{(2\pi)^n}
{\omega_nC_\Omega A^{n-\beta}2^\alpha}.$$

then

$$
\omega_{n} r_{k}^{n} \int_{\Omega} w_{\sigma}^{2} d y-(2 \pi)^{n} k>0 \quad \text { for all } k \geq K_{0}.
$$

By Plancherel's formula,

$$
\int_{B_{r_k}}\left|\widehat{\left(w_{\sigma} \phi_{j}\right)}(z)\right|^{2} d z \leq \int_{\mathbb{R}^{n}}\left|\widehat{\left(w_{\sigma} \phi_{j}\right)}(z)\right|^{2} d z=\int_{\Omega}\left|w_{\sigma} \phi_{j}\right|^{2} d x \leq 1,
$$

then

\begin{equation}\label{eq:positive-denominator-upper}
\omega_{n} r_k^{n} \int_{\Omega} w_{\sigma}^{2} d y-(2 \pi)^{n} \sum_{j=1}^{k} \int_{B_{r_k}}\left|\widehat{\left(w_{\sigma} \phi_{j}\right)}(z)\right|^{2} d z \geq \omega_{n} r_k^{n} \int_{\Omega} w_{\sigma}^{2} d y-(2 \pi)^{n} k>0, \quad k \geq K_{0}. 
\end{equation}

Set
$$
A_1
:=
\mathcal{M}_{s,\ln}(r_k)
\int_{\Omega}w_\sigma^2\,dy
+
\Psi_{s,\ln}(r_k,\sigma),
$$
and
$$
A_2
:=
\omega_n r_k^n
\int_{\Omega}w_\sigma^2\,dy.
$$
For $1\leq j\leq k$, let
$$
I_j
:=
\int_{B_{r_k}}
\left|\widehat{(w_\sigma\phi_j)}(z)\right|^2\,dz.
$$
Applying \eqref{eq:rayleigh-upper},
\eqref{eq:denominator-upper}, and
\eqref{eq:numerator-final-upper}
with $r=r_k$, we obtain
$$
\begin{aligned}
	0
	&\leq
	A_1-(2\pi)^n\sum_{j=1}^{k}
	\lambda_j^{s+\ln}(\Omega)I_j
	-
	\lambda_{k+1}^{s+\ln}(\Omega)
	\left(
	A_2-(2\pi)^n\sum_{j=1}^{k}I_j
	\right)\\
	&=
	A_1
	-
	A_2\lambda_{k+1}^{s+\ln}(\Omega)
	+
	(2\pi)^n
	\sum_{j=1}^{k}
	\left(
	\lambda_{k+1}^{s+\ln}(\Omega)
	-
	\lambda_j^{s+\ln}(\Omega)
	\right)I_j.
\end{aligned}
$$
Since $I_j\leq1$ and
$\lambda_{k+1}^{s+\ln}(\Omega)
-\lambda_j^{s+\ln}(\Omega)\geq0$, we obtain
$$
\begin{aligned}
	0
	&\leq
	A_1
	-
	A_2\lambda_{k+1}^{s+\ln}(\Omega)
	+
	(2\pi)^n
	\sum_{j=1}^{k}
	\left(
	\lambda_{k+1}^{s+\ln}(\Omega)
	-
	\lambda_j^{s+\ln}(\Omega)
	\right)\\
	&=
	A_1
	-
	\lambda_{k+1}^{s+\ln}(\Omega)
	\left(A_2-(2\pi)^nk\right)
	-
	(2\pi)^n
	\sum_{j=1}^{k}\lambda_j^{s+\ln}(\Omega).
\end{aligned}
$$
Since
$\lambda_{k+1}^{s+\ln}(\Omega)>0$ and
$A_2-(2\pi)^nk>0$, it follows that
\begin{equation}\label{eq:preliminary-upper}
	\sum_{j=1}^{k}\lambda_j^{s+\ln}(\Omega)
	\leq
	(2\pi)^{-n}
	\mathcal{M}_{s,\ln}(r_k)
	\int_{\Omega}w_\sigma^2\,dy
	+
	(2\pi)^{-n}
	\Psi_{s,\ln}(r_k,\sigma).
\end{equation}

For each $k \geq K_{0}$, define

$$
c_{1, k}:=\omega_{n} r_{k}^{n} \int_{\Omega} w_{\sigma}^{2} d y-(2 \pi)^{n} k>0.
$$

Then

$$
\omega_{n} r_{k}^{n} \int_{\Omega} w_{\sigma}^{2} d y=(2 \pi)^{n} k+c_{1, k}.
$$

Moreover, since $0 \leq w_{\sigma} \leq 1$, we have

$$
0<c_{1, k} \leq \omega_{n} r_{k}^{n}|\Omega|-(2 \pi)^{n} k=(2 \pi)^{n} k^{\alpha}.
$$

Hence, using \eqref{3.4},

$$
\begin{aligned}
(2 \pi)^{-n} \mathcal{M}_{s, \ln }\left(r_{k}\right) \int_{\Omega} w_{\sigma}^{2} d y & =(2 \pi)^{-n} n \omega_{n} r_{k}^{n+2 s}\left(\frac{\ln r_{k}^{2}}{n+2 s}-\frac{2}{(n+2 s)^{2}}\right) \int_{\Omega} w_{\sigma}^{2} d y \\
& =n\left(k+(2 \pi)^{-n} c_{1, k}\right) r_{k}^{2 s}\left(\frac{\ln r_{k}^{2}}{n+2 s}-\frac{2}{(n+2 s)^{2}}\right).
\end{aligned}
$$

Since

$$
r_{k}=(2 \pi)\left(\omega_{n}|\Omega|\right)^{-1 / n}\left(k+k^{\alpha}\right)^{1 / n},
$$

we have

$$
r_{k}^{2 s}=(2 \pi)^{2 s}\left(\omega_{n}|\Omega|\right)^{-2 s / n}\left(k+k^{\alpha}\right)^{2 s / n}=(2 \pi)^{2 s}\left(\omega_{n}|\Omega|\right)^{-2 s / n} k^{2 s / n}\left(1+\frac{2 s}{n} k^{\alpha-1}+O\left(k^{2 \alpha-2}\right)\right),
$$

and

$$
\begin{aligned}
\ln r_{k}^{2} & =\ln \left((2 \pi)^{2}\left(\omega_{n}|\Omega|\right)^{-2 / n}\left(k+k^{\alpha}\right)^{2 / n}\right) \\
& =\frac{2}{n} \ln k+\ln \left((2 \pi)^{2}\left(\omega_{n}|\Omega|\right)^{-2 / n}\right)+\frac{2}{n} k^{\alpha-1}+O\left(k^{2 \alpha-2}\right).
\end{aligned}
$$

Furthermore, since $0<c_{1, k} \leq(2 \pi)^{n} k^{\alpha}$ and $0<\alpha<1$, it follows that

$$
k+(2 \pi)^{-n} c_{1, k}=k\left(1+O\left(k^{\alpha-1}\right)\right).
$$

Therefore,

$$
\begin{aligned}
(2 \pi)^{-n} \mathcal{M}_{s, \ln }\left(r_{k}\right) \int_{\Omega} w_{\sigma}^{2} d y= & \frac{2}{n+2 s}(2 \pi)^{2 s}\left(\omega_{n}|\Omega|\right)^{-2 s / n} k^{1+2 s / n} \ln k \\
& +O\left(k^{1+2 s / n}\right)+O\left(k^{\alpha+2 s / n} \ln k\right).
\end{aligned}
$$

Since $0<\alpha<1$, we have
$$
k^{\alpha+2s/n}\ln k
=
O\left(k^{1+2s/n}\right).
$$
Therefore, for all sufficiently large $k$,

\begin{equation}\label{eq:main-term-upper}
(2 \pi)^{-n} \mathcal{M}_{s, \ln }(r_k) \int_{\Omega} w_{\sigma}^{2} d y \leq \frac{2}{n+2 s}(2 \pi)^{2 s}\left(\omega_{n}|\Omega|\right)^{-2 s / n} k^{1+2 s / n} \ln k+C k^{1+2 s / n}.
\end{equation}

On the other hand, by \eqref{eq:psi-coupled},
$$
|\Psi_{s,\ln}(r_k,\sigma)|
\leq
C_1r_k^{n+2s}\sigma
=
C_1c_0r_k^{n+2s-\beta}.
$$
Since $r_k=O(k^{1/n})$, we obtain
\begin{equation}\label{eq:remainder-upper}
	|\Psi_{s,\ln}(r_k,\sigma)|
	\leq
	Ck^{1+(2s-\beta)/n}
	\leq
	Ck^{1+2s/n}.
\end{equation}

Combining \eqref{eq:preliminary-upper},
\eqref{eq:main-term-upper}, and
\eqref{eq:remainder-upper}, we obtain, for all sufficiently large
$k$,
$$
\sum_{j=1}^{k}\lambda_j^{s+\ln}(\Omega)
\leq
\frac{2}{n+2s}(2\pi)^{2s}
(\omega_n|\Omega|)^{-2s/n}
k^{1+2s/n}\ln k
+
Ck^{1+2s/n}.
$$
There remain only finitely many integers in the range stated in
the theorem. Enlarging $C$ if necessary therefore gives the same
estimate for all such $k$. This completes the proof.
\end{proof}

\section{Numerical illustration in one dimension}
\label{sec:numerical-illustration}

In this section, we complement the eigenvalue sum estimates established
in Theorems~\ref{Theorem 2.5} and~\ref{Theorem 3.2} with a
one-dimensional numerical study. We construct a finite element
approximation of the corresponding Dirichlet eigenvalue problem and
examine whether the computed eigenvalue sums exhibit the growth
predicted by the theoretical results.

\subsection{Continuous spectral setting}

Let $\Omega=(0,1)$ and let $0<s<1$. As recalled in the introduction,
the fractional-logarithmic Laplacian is defined in~\cite{CCH26} as the
derivative of the fractional Laplacian with respect to its order
parameter. The Dirichlet spectral problem associated with
$(-\Delta)^{s+\ln}$ is well posed in the corresponding energy
framework. Its eigenvalues
\[
\lambda_1^{s+\ln}(\Omega)
\leq
\lambda_2^{s+\ln}(\Omega)
\leq
\cdots
\uparrow +\infty
\]
satisfy, in dimension one, the Weyl-type asymptotic law
\[
\lambda_k^{s+\ln}(\Omega)
\sim
2\pi^{2s}k^{2s}\ln k,
\qquad k\to\infty.
\]
By Proposition~\ref{Proposition 2.3}, we have
\begin{equation}
	\label{eq:sum-leading-term-num}
	\sum_{j=1}^k\lambda_j^{s+\ln}(\Omega)
	\sim
	\frac{2}{1+2s}\pi^{2s}
	k^{1+2s}\ln k,
	\qquad k\to\infty.
\end{equation}
Therefore, we write the principal term as
\begin{equation}
	\label{eq:leading-term-1d}
	A_s k^{1+2s}\ln k,
	\qquad
	A_s:=\frac{2}{1+2s}\pi^{2s}.
\end{equation}
Moreover, the explicit lower bound in
Theorem~\ref{Theorem 2.5} reduces to
\begin{equation}
	\label{eq:lower-bound-1d}
	L_s(k)
	:=
	A_s k^{1+2s}
	\left(
	\ln(\pi k)-\frac{1}{1+2s}
	\right),
	\qquad k\geq 1.
\end{equation}
In the present setting, $(2\pi)^{-1}\omega_1|\Omega|=1/\pi<1$.
Hence, the condition in Theorem~\ref{Theorem 2.5} is automatically
satisfied for every integer $k\geq1$.

\subsection{Finite element discretization of the fractional operator}

In this subsection, we construct a finite element approximation of the
integral fractional Laplacian on $\Omega=(0,1)$. We use conforming
$P_1$ finite elements on a uniform mesh. We refer, for instance,
to~\cite{BH17,BBNOS18,Dao26} for related finite element approaches to
one-dimensional fractional diffusion problems.

Let us consider the uniform partition
\[
0=x_0<x_1<\cdots<x_{N+1}=1,
\qquad
x_i=ih,
\qquad
h=\frac{1}{N+1}.
\]
Let $V_h$ be the conforming $P_1$ finite element space
\[
V_h :=
\left\{
v_h\in C(\mathbb{R}) :
v_h=0 \text{ in } \mathbb{R}\setminus(0,1),
\quad
v_h|_{[x_{i-1},x_i]}\in\mathbb{P}_1
\ \text{for } i=1,\ldots,N+1
\right\}.
\]
We denote its nodal basis by $\{\phi_i\}_{i=1}^{N}$.

For the integral fractional Laplacian of order $s$, let
$\mathbb A_h(s)$ denote the stiffness matrix associated with the
bilinear form
\begin{equation}
	\label{eq:frac-discrete-bilinear}
	a_h^s(u_h,v_h)
	=
	\frac{c_{1,s}}{2}
	\iint_{\mathbb R^2}
	\frac{
		(u_h(x)-u_h(y))(v_h(x)-v_h(y))
	}{
		|x-y|^{1+2s}
	}
	\,dy\,dx,
\end{equation}
where 
\[
c_{1,s}
=
\frac{
	2^{2s}s\,
	\Gamma\!\left(s+\frac12\right)
}{
	\sqrt{\pi}\,\Gamma(1-s)
}.
\]
Therefore, the entries of $\mathbb A_h(s)$ are given by
\begin{equation}
	\label{eq:frac-matrix-entry}
	(\mathbb A_h(s))_{ij}
	=
	\frac{c_{1,s}}{2}
	\iint_{\mathbb R^2}
	\frac{
		(\phi_i(x)-\phi_i(y))
		(\phi_j(x)-\phi_j(y))
	}{
		|x-y|^{1+2s}
	}
	\,dy\,dx,
	\qquad 1\leq i,j\leq N.
\end{equation}

In addition, the mass matrix is the standard finite element matrix
\begin{equation}
	\label{eq:mass-matrix-num}
	(\mathbb M_h)_{ij}
	=
	\int_0^1\phi_i(x)\phi_j(x)\,dx,
	\qquad 1\leq i,j\leq N.
\end{equation}

\subsection{Approximation of the fractional-logarithmic operator}

Since
\[
(-\Delta)^{s+\ln}
=
\frac{\partial}{\partial s}(-\Delta)^s,
\]
we approximate the fractional-logarithmic stiffness matrix by the
centered difference
\begin{equation}
	\label{eq:Ah-flog}
	\mathbb A_h^{s+\ln}
	:=
	\frac{
		\mathbb A_h(s+\varepsilon)
		-
		\mathbb A_h(s-\varepsilon)
	}{
		2\varepsilon
	},
\end{equation}
where $\varepsilon>0$ is chosen so that
$s\pm\varepsilon\in(0,1)$. This centered-difference approximation accounts for the full
dependence of the fractional stiffness matrix on $s$, including both
the power of the kernel and the normalization constant $c_{1,s}$. For fixed $h$, this centered difference is a second-order
approximation with respect to $\varepsilon$, provided that
$A_h(s)$ is sufficiently smooth as a function of $s$.

Then, we solve the generalized matrix eigenvalue problem
\begin{equation}
	\label{eq:discrete-eig-problem}
	\mathbb A_h^{s+\ln}U
	=
	\lambda_h\mathbb M_hU,
	\qquad
	U\in\mathbb R^N\setminus\{0\}.
\end{equation}
The computed eigenvalues are ordered increasingly:
\[
\lambda_{1,h}^{s+\ln}
\leq
\lambda_{2,h}^{s+\ln}
\leq
\cdots.
\]
Finally, we define the corresponding partial sums by
\begin{equation}
	\label{eq:partial-sums-num}
	S_k^h
	:=
	\sum_{j=1}^k\lambda_{j,h}^{s+\ln}.
\end{equation}

\subsection{Numerical illustration of the eigenvalue sum estimates}

Now, we present some numerical experiments on $\Omega=(0,1)$. The first
objective is to compare the computed partial sums with the explicit
lower bound~\eqref{eq:lower-bound-1d}. The second is to examine whether
the difference between the numerical sums and the principal
term~\eqref{eq:leading-term-1d} is compatible with the correction order
appearing in Theorems~\ref{Theorem 2.5} and~\ref{Theorem 3.2}. These experiments are meant to illustrate numerically the theoretical
estimates and the asymptotic behavior predicted by the analysis.

\paragraph{Variation with respect to the fractional parameter.}

First, let us fix a uniform mesh with $h=2^{-9}$ and consider
\[
s\in
\left\{
\frac14,\frac12,\frac34
\right\}.
\]
The centered difference~\eqref{eq:Ah-flog} is computed with
\[
\varepsilon
=
\min
\left\{
10^{-3},
\frac{s}{4},
\frac{1-s}{4}
\right\}.
\]

This choice keeps the finite-difference step small while ensuring that
both $s-\varepsilon$ and $s+\varepsilon$ remain safely inside the
interval $(0,1)$.

Furthermore, we perform a sensitivity test for $s=\frac12$ and $h=2^{-9}$,
using
\[
\varepsilon\in
\left\{2\times10^{-3},10^{-3},5\times10^{-4}\right\}.
\]
For $k=20,50,80$, the relative differences between the partial sums
computed with $\varepsilon=10^{-3}$ and
$\varepsilon=5\times10^{-4}$ are of order $10^{-5}$. This indicates
that the choice $\varepsilon=10^{-3}$ yields stable values for the
partial sums over the range considered below.

We first compare, in the left column of
Figure~\ref{fig:multi-s-bounds}, the numerical sums $S_k^h$ with the
lower bound $L_s(k)$ defined in~\eqref{eq:lower-bound-1d} and with the
principal term $A_s k^{1+2s}\ln k$. To further examine the deviation
from the leading-order behavior, we plot in the right column the
normalized absolute remainder
\[
\frac{
	\left|
	S_k^h-A_s k^{1+2s}\ln k
	\right|
}{
	k^{1+2s}
}.
\]
Across the three values of $s$, the normalized remainder remains
bounded and decreases over the displayed range of indices. This
behavior is consistent with a correction of order $k^{1+2s}$, as
suggested by the lower and upper estimates in
Theorems~\ref{Theorem 2.5} and~\ref{Theorem 3.2}.

\begin{figure}[htbp]
	\centering
	\includegraphics[width=\textwidth]
	{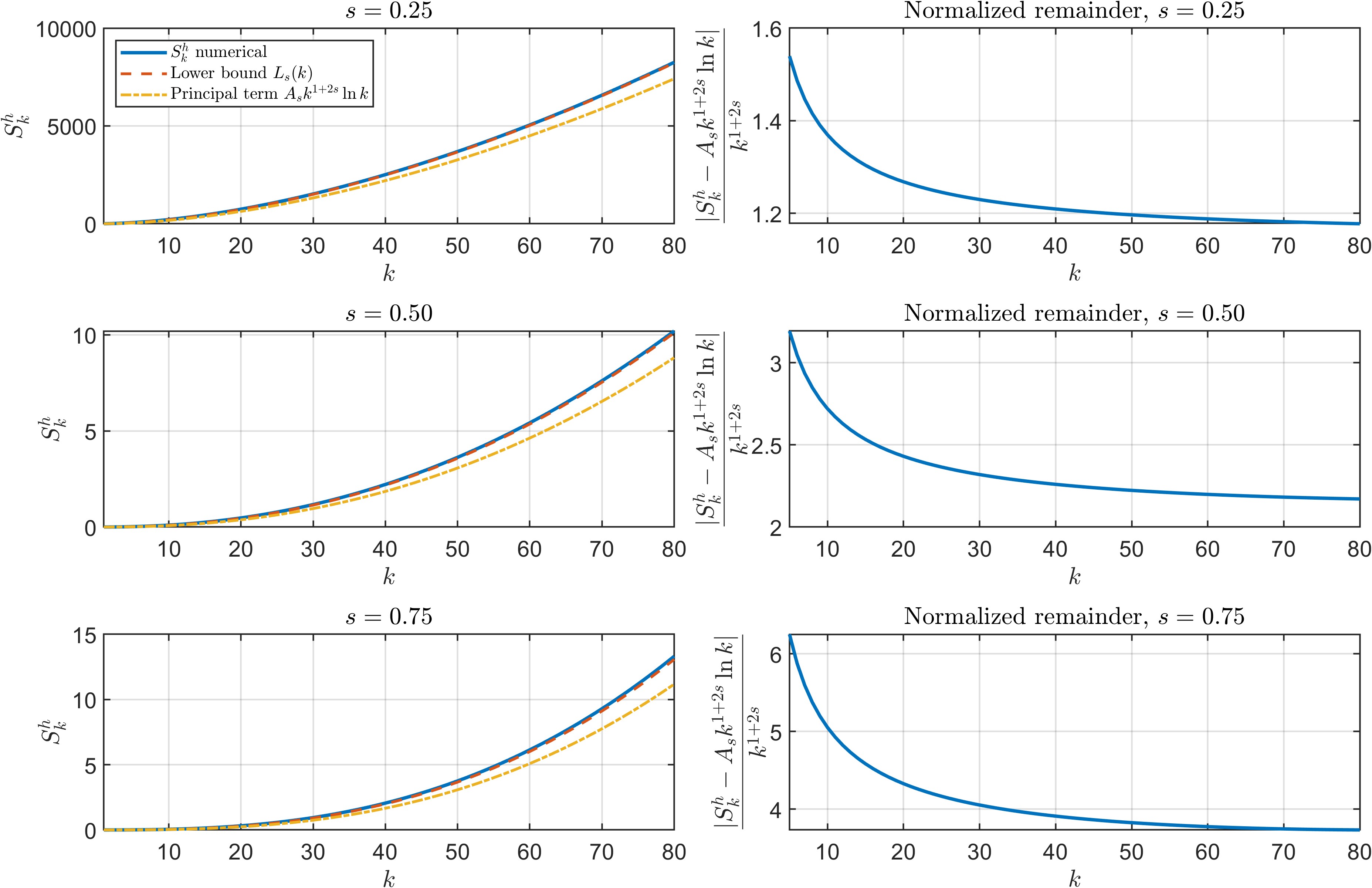}
	\caption{
		Numerical spectral sums for
		$s\in\{\frac14,\frac12,\frac34\}$, with $h=2^{-9}$ and
		$\varepsilon=10^{-3}$. Left column: comparison of the
		numerical sums $S_k^h$ with the lower bound $L_s(k)$ and
		the principal term $A_s k^{1+2s}\ln k$, displayed for
		$1\leq k\leq80$. Right column: normalized absolute
		remainder
		$\left|S_k^h-A_s k^{1+2s}\ln k\right|/k^{1+2s}$,
		displayed for $5\leq k\leq80$.
	}
	\label{fig:multi-s-bounds}
\end{figure}

Moreover, the computations show a clear change in the spectral growth as
$s$ varies. Over the displayed range, the explicit lower bound remains
below and close to the computed sums, while the principal term captures
their dominant growth.
\paragraph{Sensitivity with respect to the mesh size.}

We next fix $s=\frac12$ and compare the meshes
\[
h\in
\left\{
2^{-8},
2^{-9},
2^{-10}
\right\},
\]
using $\varepsilon=10^{-3}$. In Figure~\ref{fig:mesh-sensitivity}, we
display, in the left panel, the ratio
\[
\frac{
	S_k^h
}{
	A_s k^{1+2s}\ln k
},
\]
and, in the right panel, the absolute remainder
\[
\left|
S_k^h-A_s k^{1+2s}\ln k
\right|.
\]
The ratio is displayed for $2\leq k\leq80$, since its denominator
vanishes at $k=1$, whereas the absolute remainder is displayed for
$1\leq k\leq80$.

\begin{figure}[htbp]
	\centering
	\includegraphics[width=0.95\textwidth]
	{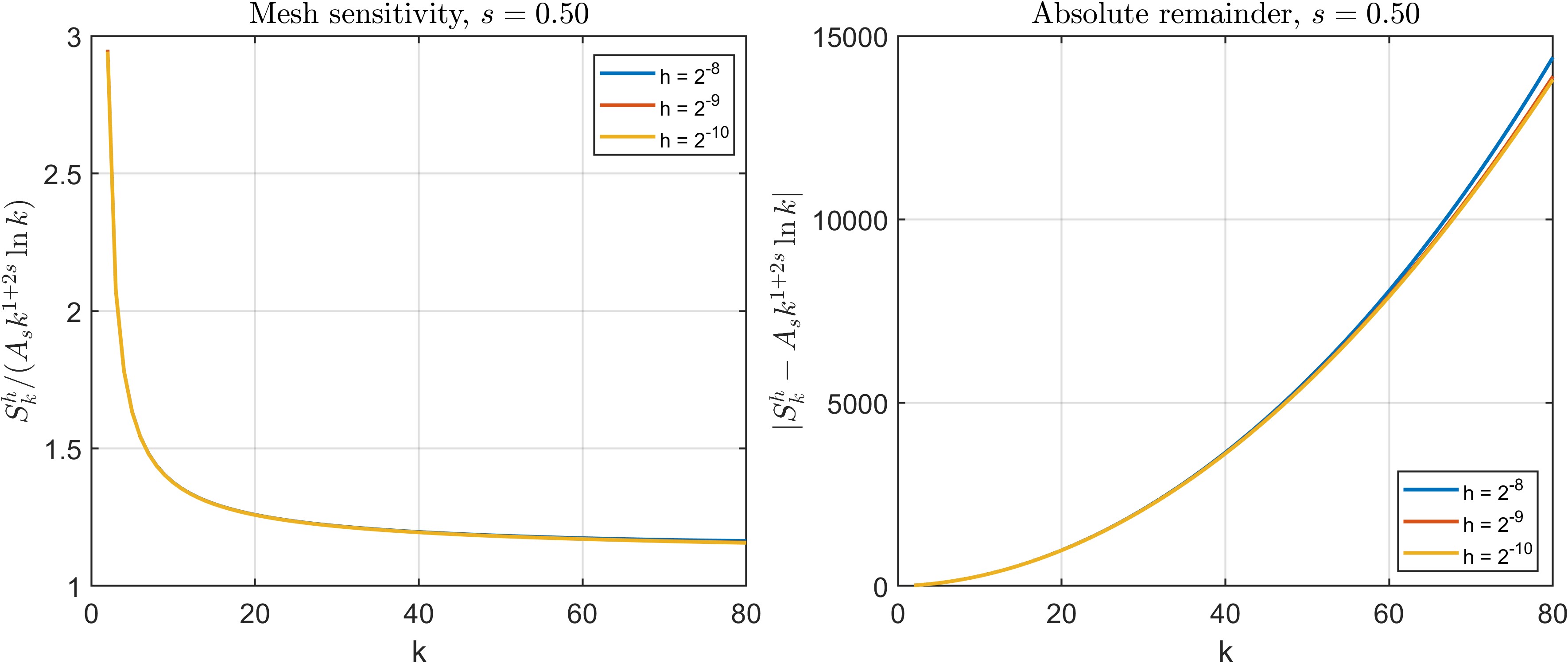}
	\caption{
		Sensitivity with respect to the mesh size for
		$s=\frac12$ and $\varepsilon=10^{-3}$. Left: ratio
		$S_k^h/(A_s k^{1+2s}\ln k)$ for $2\leq k\leq80$.
		Right: absolute remainder
		$\left|S_k^h-A_s k^{1+2s}\ln k\right|$
		for $1\leq k\leq80$. The three meshes are
		$h=2^{-8}$, $2^{-9}$, and $2^{-10}$.
	}
	\label{fig:mesh-sensitivity}
\end{figure}

We observe very good agreement among the results obtained with the
three meshes over the displayed range of indices. In particular, the
curves corresponding to $h=2^{-9}$ and $h=2^{-10}$ are almost
indistinguishable. This agreement indicates that the numerical results
are stable under mesh refinement for low and intermediate eigenvalue
indices.

\paragraph{Numerical conclusions.}

These experiments indicate that
\begin{itemize}
	\item the explicit lower bound in
	Theorem~\ref{Theorem 2.5} remains below and close to the computed
	partial sums over the considered range;
	
	\item the principal term $A_s k^{1+2s}\ln k$ captures the dominant
	growth of the numerical eigenvalue sums;
	
	\item the normalized remainder is compatible with a correction of
	order $k^{1+2s}$, as suggested by
	Theorems~\ref{Theorem 2.5} and~\ref{Theorem 3.2};
	
	\item the computed partial sums remain stable with respect to the
	choice of $\varepsilon$ over the tested range;
	
	\item the numerical results remain stable under mesh refinement for
	low and intermediate eigenvalue indices.
\end{itemize}


\noindent{\bf Conflicts of interest:} The authors declare that they have no conflicts of interest regarding this work.\\
{\bf Data availability:} This paper has no associated data.

\end{document}